\documentclass[11pt]{amsart}

\usepackage{amsmath,amssymb,amsthm,mathtools,mathrsfs,tikz,comment}
\usepackage[margin=1.15in]{geometry}
\usepackage{enumitem}
\usepackage[colorlinks=true,citecolor=blue,linkcolor=blue,urlcolor=blue]{hyperref}

\newtheorem{theorem}{Theorem}[section]
\newtheorem{proposition}[theorem]{Proposition}
\newtheorem{lemma}[theorem]{Lemma}

\newtheorem{mainresult}{Theorem}

\newcommand{\Pj}{\mathbb P}
\newcommand{\Kah}{\mathcal K}
\newcommand{\Ric}{\operatorname{Ric}}
\newcommand{\ddbar}{\sqrt{-1}\,\partial\bar\partial}
\newcommand{\Fcal}{\mathcal F}
\newcommand{\NA}{\mathrm{NA}}
\newcommand{\cpsc}{\mathcal K_X^{\mathrm{psc}}}
\newcommand{\ctpsc}{\mathcal T_X^+}
\newcommand{\Hpot}{\mathcal H}

\title[PSC K\"ahler cones of minimal K\"ahler surfaces]
{PSC K\"ahler cones of minimal K\"ahler surfaces and a birational obstruction}
\author{Zehao Sha}
\date{}

\begin{document}

\begin{abstract}
We characterize the positive scalar curvature K\"ahler cone of every minimal compact K\"ahler surface with Kodaira dimension \(-\infty\). By the Enriques--Kodaira classification, every such surface is either \(\Pj^2\) or a geometrically ruled surface \(\Pj(E)\to\Sigma_g\), where \(E\) is a rank-two holomorphic vector bundle over a compact Riemann surface of genus \(g\). On \(\Pj^2\), every K\"ahler class admits a K\"ahler metric of positive scalar curvature, whereas on \(\Pj(E)\), every K\"ahler class of positive total scalar curvature admits a metric of positive scalar curvature if and only if \(g\le1\) or \(E\) is slope-semistable. We further prove that if a smooth compact K\"ahler surface admits a birational morphism onto a ruled surface \(\Pj(E)\to\Sigma_g\), where \(g\ge2\) and \(E\) is slope-unstable, then it contains a K\"ahler class of positive total scalar curvature admitting no positive scalar curvature K\"ahler metric.
\end{abstract}

\maketitle
\tableofcontents

\section{Introduction}
\label{sec:introduction}

The existence problem for Riemannian metrics of positive scalar curvature (PSC) admits several criteria and obstructions. In K\"ahler geometry, metric existence problems are naturally formulated within a fixed K\"ahler class. Thus, even when the underlying complex manifold admits some K\"ahler metric of positive scalar curvature, it remains a separate problem to decide which K\"ahler classes contain such metrics.

Let \(X\) be a compact K\"ahler manifold and let
\(\mathcal K_X\) denote the K\"ahler cone.  We define the PSC K\"ahler cone by
\[
        \mathcal K_X^{\mathrm{psc}}
        :=
        \left\{
        \alpha\in\mathcal K_X:\,\exists\, \omega \in \alpha,~\text{ s.t. }S(\omega)>0
        \right\}.
\]
A necessary condition for a K\"ahler class \(\alpha \in \cpsc\) is the positive total scalar curvature:
\[
        \int_X S(\omega)\, \omega^n =  2n\pi c_1(X)\cdot\alpha^{n-1}>0.
\]
Accordingly, one can define the positive total scalar curvature K\"ahler cone
\[
        \mathcal T_X^+
        :=
        \left\{
        \alpha\in\mathcal K_X:
        c_1(X)\cdot\alpha^{n-1}>0
        \right\}.
\]
It is clear that \(\mathcal K_X^{\mathrm{psc}}\subseteq\mathcal T_X^+\). Whether the reverse inclusion holds is an open question: an affirmative answer would give a purely numerical criterion for the existence of a positive scalar curvature K\"ahler metric in a prescribed K\"ahler class. Moreover, it would identify the positive scalar curvature cone, and in particular its boundary, by the intersection number \(c_1(X)\cdot\alpha^{n-1}>0\). A closely related question was asked by Brown in \cite[Question~7.6]{Brown2026} also.

The purpose of this note is to show that the answer to the above question is negative in general. More precisely, we first characterize the PSC K\"ahler cone on minimal K\"ahler surfaces of Kodaira dimension \(-\infty\). By the Enriques--Kodaira classification, such a surface is either \(\Pj^2\), a Hirzebruch surface \(\mathbb F_n\) with \(n=0\) or \(n\ge2\), or a geometrically ruled surface \(\Pj(E)\to\Sigma_g\) over a compact Riemann surface of genus \(g\ge1\), where \(E\) is a rank-two holomorphic vector bundle, and \(\Pj(E)\) denotes the projectivization of \(E\); see \cite[Chapter~VI]{BHPV2004}.

On a minimal ruled surface  \(X=\Pj(E)\xrightarrow{\pi}\Sigma_g\), let \(F:=\pi^*[\mathrm{pt}]\in H^{1,1}(X,\mathbb R)\) denote the fibre class. Define the normalized tautological class (see \cite[p.~456]{Miyaoka1987}) by
\[
\zeta
:=
\frac12c_1\bigl(K_{X/\Sigma_g}^{-1}\bigr)
=
c_1\bigl(\mathcal O_{\Pj(E)}(1)\bigr)
-\frac{\deg E}{2}\,F.
\]
Moreover, \(\zeta\) depends only on the ruling and is unchanged under \(E\mapsto E\otimes L\) for \(L\) a line bundle. In particular, \(F\) and \(\zeta\) generate \(H^{1,1}(X,\mathbb R)\) and every real \((1,1)\)-class can be written uniquely as \(\alpha=2\pi(a\zeta+bF)\).

For a nonzero holomorphic subbundle \(S\subseteq E\), set \(\mu(S):=\deg S/\operatorname{rank}S\)
and let \(\mu_{\max}(E)\) denote the largest slope in the Harder--Narasimhan filtration of \(E\). When \(E\) is semistable, we have \(\mu_{\max}(E)=\mu(E)\). We then define 
\begin{equation}
\Delta(E):=2\mu_{\max}(E)-\deg E.
\label{eq:intro-HN-defect}
\end{equation}
Thus \(\Delta(E)\geq0\), with equality if and only if \(E\) is slope-semistable. Moreover, \(\Delta(E)\) is also unchanged under \(E\mapsto E\otimes L\). Our first result is the following.
\begin{mainresult}
\label{thm:main-minimal}
Let \(X\) be a minimal compact K\"ahler surface of Kodaira dimension \(-\infty\).
\begin{enumerate}[label=\textup{(\roman*)}]
\item If \(X=\Pj^2\) or \(X=\mathbb F_n\) with \(n=0\) or \(n\ge2\), then
\[
\Kah_X^{\mathrm{psc}}
=
\mathcal T_X^+
=
\Kah_X.
\]
\item If \(X=\Pj(E)\rightarrow\Sigma_g\) with \(g\ge1\), then
\begin{align*}
\mathcal T_X^+
&=
\left\{
2\pi(a\zeta+bF):
a>0,\quad
b>\max\left\{g-1,\frac{\Delta(E)}2\right\}a
\right\},
\\
\Kah_X^{\mathrm{psc}}
&=
\left\{
2\pi(a\zeta+bF):
a>0,\quad
b>\left(g-1+\frac{\Delta(E)}2\right)a
\right\}.
\end{align*}
Consequently,
\[
\Kah_X^{\mathrm{psc}}=\mathcal T_X^+
\quad\Longleftrightarrow\quad
g=1\ \text{ or }\ E\ \text{is slope-semistable}.
\]
\end{enumerate}
\end{mainresult}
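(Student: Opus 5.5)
The plan is to first reduce everything to explicit intersection numbers. After that, part (i) and the sufficiency half of (ii) come from explicit constructions, and the necessity half of (ii) needs a genuine obstruction argument.

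\textbf{Intersection theory.} On \(X=\Pj(E)\) I will compute the following, with \(C\) denoting the section coming from the maximal destabilizing line subbundle when \(E\) is unstable:
\begin{itemize}
\item \(\zeta^2=c_1(\mathcal O(1))^2-\deg E=0\), \(\zeta\cdot F=1\), \(F^2=0\);
\item \(c_1(X)=2\zeta+(2-2g)F\);
\item \([C]=\zeta-\tfrac{\Delta(E)}2F\), so that \(C^2=-\Delta(E)\).
\end{itemize}
For \(\alpha=2\pi(a\zeta+bF)\) this gives \(c_1(X)\cdot\alpha=2\pi\bigl(2b-(2g-2)a\bigr)\). Hence total scalar curvature is positive iff \(b>(g-1)a\).

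\textbf{Kähler cone and \(\mathcal T_X^+\).} I will quote Miyaoka's description of the nef cone of a ruled surface. It says \(\Kah_X=\{a>0,\ b>\tfrac{\Delta(E)}2a\}\), where the bound \(b>0\) in the semistable case and \(\alpha\cdot C>0\) in the unstable case are the binding constraints. Intersecting with \(b>(g-1)a\) gives the stated formula for \(\mathcal T_X^+\). The same computation applied to \(\Pj^2\) and \(\mathbb F_n\) shows \(\mathcal T_X^+=\Kah_X\) there, since \(-K_X\) has positive degree against every Kähler class.

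\textbf{Sufficiency (constructions).} For \(\Pj^2\), the Fubini--Study metric already lies in every class up to scaling. For \(\mathbb F_n\), I will use the Calabi ansatz / momentum construction of Hwang--Singer on \(\Pj(\mathcal O\oplus\mathcal O(-n))\). One picks a momentum profile with \(S>0\); this is possible because the base is \(\Pj^1\) with positive curvature.

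For \(g\ge1\) and \(E\) semistable, I will use Narasimhan--Seshadri to write \(X=(\widetilde\Sigma_g\times\Pj^1)/\pi_1\) with a projectively flat structure. The local product of a multiple of Fubini--Study on the fibre and a constant-curvature metric on the base then represents \(\alpha\). Its constant scalar curvature has the sign of \(c_1\cdot\alpha\), which is positive exactly on \(\mathcal T_X^+\).

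For \(E\) unstable, I will write \(0\to L\to E\to M\to0\) with \(\deg L-\deg M=\Delta(E)\), and do a momentum construction on \(\Pj(L\oplus M)\) over a hyperbolic base. Parametrize the fibre directions by a moment variable \(\tau\in[-a,a]\) and write the base metric as \((b+\tau\cdot\text{shift})\,\omega_\Sigma\). The scalar curvature formula then contains the term \((2-2g)/(b-\dots)\) from the base, plus a fibre term controlled by a profile \(\Theta(\tau)\). I will choose \(\Theta\) to make \(S>0\) exactly when \(b>(g-1+\tfrac{\Delta}{2})a\). The worst point is the end of the momentum interval over the curve \(C\) of self-intersection \(-\Delta\).

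Finally, I pass to a nonsplit extension. I deform the split metric through the \(\mathbb C^*\)-family degenerating \(E\) to \(L\oplus M\). Since \(S>0\) is an open condition in \(C^\infty\), the nearby fibres of this family inherit PSC metrics. These nearby fibres are all isomorphic to \(\Pj(E)\) for nonsplit \(E\).

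\textbf{Necessity (the main obstacle).} I expect the main difficulty to be showing that classes with \((g-1)a<b\le(g-1+\tfrac\Delta2)a\) carry no PSC metric. Note that \(c_1\cdot C=2-2g-\Delta<0\) and \(\alpha\cdot C=2\pi(b-\tfrac\Delta2a)\). The numerical condition to be proved is therefore equivalent to
\[
c_1(X)\cdot\alpha\ >\ 2\pi a\,\Delta .
\]

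My first attempt is a symmetrization-plus-Gauss--Bonnet argument. Averaging over the \(S^1\subset\mathbb C^*\) is only available in the split case, so in general I would instead integrate \(\Ric(\omega)\wedge\omega=\tfrac{S}{4}\omega^2\) against the fibration. Along each fibre \(F\), Gauss--Bonnet gives \(\int_F\Ric(\omega)=4\pi\). Along \(C\), I use the second fundamental form inequality \(\int_C\Ric(\omega)|_C\le 2\pi\chi(C)-(\text{normal curvature})\). Comparing the two should give a weighted identity forcing \(\min S\le0\) when \(c_1\cdot\alpha\le 2\pi a\Delta\).

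If that fails, the fallback is a limiting argument along the degeneration, using the momentum profile estimate. The idea is to show that any \(S^1\)-invariant PSC metric on \(\Pj(L\oplus M)\) forces the profile inequality, and then reduce to the invariant case by averaging after degenerating to the split bundle.
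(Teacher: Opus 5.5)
\textbf{Gap: the necessity half of (ii).} The missing step is necessity for unstable \(E\) with \(g\ge2\), which is the real content of the theorem. Neither route you sketch can close it.

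\emph{First attempt.} The quantities \(\int_F\Ric(\omega)=2\pi c_1(X)\cdot F\) and \(\int_C\Ric(\omega)=2\pi c_1(X)\cdot C\) are pairings of a closed form with fixed curves, so they are the same for every K\"ahler metric. There is no second-fundamental-form inequality to exploit: \(\Ric(\omega)|_C=\Ric(\omega|_C)+\Theta(N_C,h_{\mathrm{quot}})\) holds as forms, and it integrates to exactly \(2\pi(2-2g-\Delta(E))\). These quantities never involve the trace \(S(\omega)=2\Ric(\omega)\wedge\omega/\omega^2\), so no combination of them bounds \(\min S\). An obstruction needs a weighted integral \(\int_X w\,S(\omega)\,\omega^2\) that is class-invariant for a metric-dependent weight \(w\). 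In the split case the moment map supplies such a weight, as in the appendix. For \(g\ge2\) and \(E\) nonsplit, however, \(\Pj(E)\) carries no circle action.

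\emph{Fallback.} This runs in the wrong direction. Openness of \(S>0\) carries PSC from the split central fibre to \(X_t\cong\Pj(E)\), but not back: the identifications \(X_t\cong X\) degenerate as \(t\to0\), and any limit would at best satisfy \(S\ge0\). Moreover, averaging a metric over \(S^1\) does not preserve \(S>0\), because \(S\) is nonlinear in \(\omega\). The paper's symmetrization lemma needs existence and uniqueness for \(S(\omega_\varphi)\,\omega_\varphi^n=\Omega\).

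\textbf{Missing idea.} The inequality should come from asymptotic slopes of a functional along rays issuing from the PSC metric on \(X\) itself, not from transporting metrics.
\begin{itemize}
\item The paper uses \cite[Theorem~A]{Sha2026Variational}: a PSC metric in \(\alpha\) gives coercivity of \(\Fcal=M+\overline S_\alpha J\), hence \(\Fcal^{\NA}\ge\delta J^{\NA}\) on every test configuration.
\item On the deformation to the normal cone of the Harder--Narasimhan section, polarized by \(\overline{\mathcal A}_c=H-cP\), one has \(\Fcal^{\NA}(c)=\bigl(2cA\cdot C+(2-2g)c^2\bigr)/A^2\).
\item Letting \(c\nearrow a\) (the Seshadri constant) gives \(2b>(\Delta(E)+2g-2)a\). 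Irrational classes are handled by perturbation.
\end{itemize}
Your degeneration picture fits this. As \(c\nearrow a\), the polarization collapses the component \(X\) of the central fibre onto \(\Sigma_g\), leaving \(\Pj(\mathcal O\oplus N_C)\cong\Pj(L\oplus M)\). With \(m=a\), \(\lambda=\bigl(b-\tfrac{\Delta(E)}2a\bigr)/\Delta(E)\) and \(s=(2-2g)/\Delta(E)\), one gets \(\Fcal^{\NA}(c)\to\tfrac{a\Delta(E)}{A^2}(2\lambda+sm)\). But the degeneration has to enter as a slope, not through metrics.

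\textbf{Smaller repairs on the sufficiency side.}
\begin{itemize}
\item Narasimhan--Seshadri gives a projectively flat unitary structure only for polystable \(E\). For strictly semistable nonsplit \(E\) you need the same argument you use in the unstable case: degenerate to the Jordan--H\"older graded bundle \(L\oplus Q\) and use openness.
\item The admissible profile comes from an integral constraint, not a pointwise check at one end of the momentum interval: one needs \(G''=s-\mu\) with \(\mu>0\) and \(\int_0^m(m-r)\mu\,dr=\lambda m+\tfrac s2m^2\).
\end{itemize}
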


For decomposable bundles \(E=\mathcal O_{\Sigma_g} \oplus L\), we also give in Appendix~\ref{app:decomposable-model} a direct momentum construction, which therefore provides a concrete geometric model for the obstruction appearing in the above result.

The variational characterization in \cite[Theorem~A]{Sha2026Variational} yields a general stability
obstruction to equality of the two cones. We show moreover that this obstruction persists under birational
morphisms. Applying this principle to the deformation of the normal cone of the Harder--Narasimhan section gives the following result.

\begin{mainresult}
\label{thm:main-birational-obstruction}
Let \(X\) be a smooth compact K\"ahler surface. Suppose that \(X\) admits a birational morphism
\(q:X\rightarrow Y=\Pj(E)\), where \(E\) is a slope-unstable rank-two holomorphic vector bundle over a compact Riemann surface \(\Sigma_g\) of genus \(g\ge2\). Then
\[
\Kah_X^{\mathrm{psc}}
\subsetneq
\mathcal T_X^+.
\]
\end{mainresult}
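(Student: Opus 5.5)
The plan is to exhibit an explicit K\"ahler class \(\alpha\in\mathcal T_X^+\) on \(X\) that lies outside \(\Kah_X^{\mathrm{psc}}\). The class is obtained by pulling back a bad class from \(Y\) and perturbing it slightly by the exceptional divisors. The obstruction comes from the variational characterization \cite[Theorem~A]{Sha2026Variational}. The form of it I intend to use is this: if \(\alpha\in\Kah_X^{\mathrm{psc}}\), then for every test configuration (or deformation to the normal cone) the corresponding non-Archimedean slope of the total-scalar-curvature functional must be positive. Equivalently, for a curve \(C\subset X\), the deformation to the normal cone of \(C\) produces a family of classes \(\alpha-tC\), and PSC forces a strict positivity of a quantity of the form \(c_1(X)\cdot(\alpha-tC)\) integrated in \(t\), up to the Seshadri-type threshold.

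First I work on \(Y\). Since \(E\) is unstable, there is a maximal destabilizing line subbundle \(L\subset E\) with \(\mu(L)=\mu_{\max}(E)\), and hence a section \(C_0\subset Y\) with \(C_0^2=-\Delta(E)<0\). By Theorem~\ref{thm:main-minimal}(ii), classes \(2\pi(a\zeta+bF)\) satisfying
\[
\max\{g-1,\Delta/2\}a<b\le (g-1+\Delta/2)a
\]
lie in \(\mathcal T_Y^+\setminus\Kah_Y^{\mathrm{psc}}\). This band is nonempty exactly because \(g\ge2\) and \(\Delta>0\). Its proof presumably shows that the deformation to the normal cone of \(C_0\) gives a nonpositive slope there. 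I fix \(\beta\) strictly inside the band, i.e.\ with \(b<(g-1+\Delta/2)a\), so that the destabilizing slope is strictly negative rather than zero.

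Next I pass to \(X\). Since \(q\) is a composition of point blowups, \(q^*\beta\) is nef and big but not K\"ahler. I take \(\alpha_\varepsilon=q^*\beta-\varepsilon\sum_i c_iE_i\), with small weights \(c_i>0\) chosen in the usual way so that \(\alpha_\varepsilon\) is K\"ahler. We have
\[
c_1(X)\cdot\alpha_\varepsilon=c_1(Y)\cdot\beta+O(\varepsilon),
\]
so \(\alpha_\varepsilon\in\mathcal T_X^+\) for small \(\varepsilon\). For the obstruction I use the strict transform \(\tilde C_0\) of \(C_0\). Its self-intersection is \(-\Delta-m\le C_0^2\), where \(m\) accounts for blown-up points on \(C_0\), and \(K_X\cdot\tilde C_0\) changes correspondingly. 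The slope of the deformation to the normal cone of \(\tilde C_0\) with respect to \(\alpha_\varepsilon\) is then a continuous function of \(\varepsilon\) and of the intersection numbers \(\alpha_\varepsilon\cdot\tilde C_0\), \(\tilde C_0^2\), \(K_X\cdot\tilde C_0\). As \(\varepsilon\to0\) it tends to the corresponding quantity on \(Y\), corrected by the blowup contributions. Choosing \(\beta\) strictly inside the band keeps this slope negative for small \(\varepsilon\), which contradicts \(\alpha_\varepsilon\in\Kah_X^{\mathrm{psc}}\).

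The main obstacle is the last step. When blown-up points lie on \(C_0\), the normal cone data of \(\tilde C_0\) change: \(\tilde C_0^2\) decreases and \(-K_X\cdot\tilde C_0\) decreases by the same amount. It must be checked that the net effect on the slope functional of \cite{Sha2026Variational} does not help positivity. My first attempt is to compute the slope explicitly as a polynomial in the threshold \(t\), using \(\tilde C_0\cong\Sigma_g\). The adjunction identity \(-K_X\cdot\tilde C_0=\tilde C_0^2+2-2g\) shows that both changes enter through the single parameter \(\tilde C_0^2\), and I expect the slope to be monotone in it. If that fails, the fallback is to perform all blowups at points off \(C_0\) after an elementary-transformation reduction. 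However, \(q\) is given, so the monotonicity computation is the route to pursue.
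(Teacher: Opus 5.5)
Your main route breaks down as soon as some blown-up point (possibly infinitely near) lies on \(C_0\). The cause is not the change in \(\tilde C_0^2\).

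For any smooth genus-\(g\) curve \(C\) on a surface, the computation in the proof of Proposition~\ref{prop:HN-obstruction}, combined with adjunction, gives \(\Fcal^{\NA}(c)=\frac{2c}{\alpha^2}\bigl(\alpha\cdot C-(g-1)c\bigr)\). This does not involve \(C^2\) at all, so the monotonicity you plan to check is vacuous.

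What the blowups destroy is the admissible range of \(c\). The deformation to the normal cone is a relatively ample test configuration only for \(c\) below the Seshadri constant, i.e.\ while \(\alpha_\varepsilon-c\tilde C_0\) is ample. Take a single blowup at \(p\in C_0\). Then \(\tilde C_0=q^*C_0-E_1\) and \(\alpha_\varepsilon-c\tilde C_0=q^*(\beta-cC_0)+(c-\varepsilon c_1)E_1\). Its degree on \(E_1\) is \(-(c-\varepsilon c_1)\), so only \(0<c<\varepsilon c_1\) is allowed. For such \(c\), \(\alpha_\varepsilon\cdot\tilde C_0-(g-1)c>\beta\cdot C_0-g\varepsilon c_1>0\), so the slope is positive and there is no obstruction.

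Negativity on \(Y\) requires \(c>\beta\cdot C_0/(g-1)\), a fixed positive number. On \(X\), however, the threshold collapses to \(0\) as \(\varepsilon\to0\) instead of tending to the Seshadri constant \(a\) of \(C_0\) on \(Y\). The continuity in \(\varepsilon\) you invoke holds for the formula, but not for the range of \(c\) on which that formula is the slope of an actual test configuration.

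The paper avoids this by never taking the deformation to the normal cone of \(\tilde C_0\). It first fixes a rational \(c\) with \(\beta\cdot C_0/(g-1)<c<a\) on \(Y\). It then lifts that test configuration through each blowup: it blows up, in the total space, the closure \(\mathcal Z_p\) of the orbit section \(\{p\}\times\mathbb C^*\), and polarizes by \(\rho^*\mathcal A-\varepsilon\mathcal D\). When \(p\in C_0\), \(\mathcal Z_p\) meets the central fibre at a point of the exceptional component \(P\), off the component carrying \(\beta-cC_0\); the lifted configuration is therefore not a deformation to a normal cone at all. Relative ampleness then holds for small \(\varepsilon\) with \(c\) fixed. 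The intersection numbers are polynomials in \(\varepsilon\) tending to the negative slope on \(Y\), and \(c_1(X)\cdot A_N=c_1(Y)\cdot A-\sum_i\varepsilon_i>0\).

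Alternatively, the fallback you dismissed does work. The objection that ``\(q\) is given'' does not apply, because the conclusion concerns only \(X\). In each reducible fibre of \(X\to\Sigma_g\), the section \(\tilde C_0\) meets exactly one component, which has multiplicity one. A reducible fibre always contains a \((-1)\)-curve other than that component, since a multiplicity-one \((-1)\)-curve is never the only \((-1)\)-curve in its fibre. Contracting such curves successively gives \(q':X\to Y'=\Pj(E')\), an isomorphism near \(\tilde C_0\). The image of \(\tilde C_0\) is a section with self-intersection \(-\Delta(E)-m<0\), hence the Harder--Narasimhan section \(C_0'\) of an unstable \(E'\). Then \(\tilde C_0=q'^*C_0'\), and \(\alpha_\varepsilon-c\tilde C_0=q'^*(\beta'-cC_0')-\varepsilon\sum_ic_iE_i\) is K\"ahler for fixed \(c<a\) and small \(\varepsilon\). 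With \(c\) chosen first and \(\varepsilon\) afterwards, your argument goes through.
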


Theorem~\ref{thm:main-birational-obstruction} gives a necessary birational condition for equality of the two cones. It is natural to ask whether this condition is also sufficient. Theorem~\ref{thm:main-minimal} gives an affirmative answer when \(X\) is minimal. In general, the converse remains open.

These results suggest the following possible picture for compact K\"ahler surfaces (not necessarily minimal) of Kodaira dimension \(-\infty\). Equality \(\cpsc=\ctpsc\) should always hold for rational surfaces and for non-rational ruled surfaces based on elliptic curves. For non-rational ruled surfaces over a curve of genus at least two, one may ask whether equality is equivalent to the semistability of the rank-two bundle defining every minimal ruled surface dominated by \(X\). In higher dimensions, it might be interesting to seek an analogous algebraic stability condition whose satisfaction is equivalent to \(\Kah_X^{\mathrm{psc}}=\mathcal T_X^+\).

\subsection*{Organization of the paper}
The paper is organized as follows.
Section~\ref{sec:ruled-prelim} recalls the geometry of ruled surfaces and fixes the notation used throughout the paper.
Section~\ref{sec:HN-obstruction} develops the non-Archimedean prescribed scalar curvature measure functional and derives the obstruction associated with the deformation to the normal cone of the Harder--Narasimhan section for an unstable vector bundle \(E\).
Section~\ref{sec:minimal-proof} combines this obstruction with deformation arguments to determine the PSC K\"ahler cones of minimal surfaces and prove Theorem~\ref{thm:main-minimal}.
Section~\ref{sec:birational-propagation} lifts the obstruction through point blowups and proves Theorem~\ref{thm:main-birational-obstruction}.
Appendix~\ref{app:decomposable-model} gives the momentum construction on decomposable ruled surfaces and identifies the resulting invariant cone with the full PSC K\"ahler cone.

\subsection*{On the use of AI} Proposition \ref{prop:S1-invariant-psc-cone} was suggested by ChatGPT 5.5 Plus as a key step of the counterexample when we were trying to \textit{prove} \(\cpsc=\ctpsc\) on every K\"ahler surface \(X\). After that, we improved this mechanism and related it to the slope stability for ruled surfaces.
We also used ChatGPT 5.5 Plus and 5.6 Sol as an assisting tool to refine the writing and presentation, and to help verify the mathematical logic.

\subsection*{Acknowledgements} 
The author is sincerely grateful to Prof. Xiuxiong Chen for his continued encouragement and support. The author also thanks Longteng Chen, Song Sun, Jian Wang, Mingchen Xia, and Qi Yao for helpful discussions.

\section{Geometry of ruled surfaces}
\label{sec:ruled-prelim}

In this section, we introduce some fundamental facts of ruled surfaces and keep our notations. Standard references for ruled surfaces are \cite[Chapter~V, Section~2]{Hartshorne1977} and \cite[Chapter~V]{BHPV2004}.

Let \(X=\Pj(E)\xrightarrow{\pi}\Sigma_g\) be a geometrically ruled surface, and let \(F\) and \(\zeta\) be the fibre class and the normalized tautological class introduced in Section~\ref{sec:introduction}. The intersection numbers are given by
\begin{equation}
\zeta^2=0,
\qquad
\zeta\cdot F=1,
\qquad
F^2=0.
\label{eq:ruled-intersections}
\end{equation}
The first Chern class is then given by
\begin{equation}
c_1(X)
=
2\zeta+(2-2g)F.
\label{eq:ruled-c1}
\end{equation}
Consequently, if \(A=a\zeta+bF\), then
\begin{equation}
A^2=2ab,
\qquad
c_1(X)\cdot A
=
2b+(2-2g)a.
\label{eq:ruled-total}
\end{equation}
Suppose that \(E\) is unstable. Then the Harder--Narasimhan filtration \cite{HarderNarasimhan1975} has the form
\[
0\longrightarrow L\longrightarrow E\longrightarrow Q
\longrightarrow0,
\]
where \(\deg L=\mu_{\max}(E)\). The quotient \(E\twoheadrightarrow Q\) determines a section \(C\subset X\), called the \emph{Harder--Narasimhan section}. Thus \(C\) is the section associated with the quotient of \(E\) by its maximal-slope subbundle; see also \cite{LangeNarasimhan1983}. Its numerical class and self-intersection are
\begin{equation}
C
\equiv
\zeta-\frac{\Delta(E)}2F,
\qquad
C^2=-\Delta(E).
\label{eq:HN-section}
\end{equation}
In the decomposable case \(E=\mathcal O_{\Sigma_g}\oplus L\) with \(\deg L=-d<0\), one has \(d=\Delta(E)\). We denote the Harder--Narasimhan section by \(C_0\) induced, in the quotient convention, by \(\mathcal O_{\Sigma_g}\oplus L\rightarrow L\). It satisfies
\[
        C_0^2=-d,
        \qquad
        C_0=\zeta-\frac d2F.
\]

The standard numerical description of the K\"ahler cone is
\begin{equation}
A=a\zeta+bF\in\Kah_X
\quad\Longleftrightarrow\quad
a>0,
\qquad
b>\frac{\Delta(E)}2a.
\label{eq:kahler-cone}
\end{equation}
Indeed, the description of the nef cone of a projective bundle over a curve originates from Miyaoka \cite{Miyaoka1987} (for a more precise version, one can check \cite[Lemma~2.1]{Fulger2011}). Therefore, \eqref{eq:kahler-cone} follows from \eqref{eq:ruled-intersections}, \eqref{eq:HN-section}, and the Nakai--Moishezon criterion. Combining \eqref{eq:ruled-total} and \eqref{eq:kahler-cone}, we obtain
\begin{equation}
\mathcal T_X^+
=
\left\{
2\pi(a\zeta+bF):
a>0,\quad
b>
\max\left\{
g-1,\frac{\Delta(E)}2
\right\}a
\right\}.
\label{eq:total-cone-ruled}
\end{equation}

\section{The PSC K\"ahler cone of unstable minimal ruled surfaces}
\label{sec:HN-obstruction}

In this section, we determine explicitly the PSC K\"ahler cone of a ruled surface \(\Pj(E)\to \Sigma_g\) where \(E\) is unstable. The necessary numerical condition is obtained by combining the variational characterization of PSC K\"ahler metrics with the non-Archimedean slope of the deformation to the normal cone of the Harder--Narasimhan section.

\subsection{The non-Archimedean \(\Fcal\)-functional}

Let \(Z\) be a smooth projective surface and let
\(A\) be a \(\mathbb Q\)-ample class satisfying
\(c_1(Z)\cdot A>0\).  Set \(\alpha=2\pi A\), \(V_\alpha=\alpha^2\), and fix a reference K\"ahler form \(\omega\in\alpha\).  Since
\[
        \overline S_\alpha
        =
        \frac{4\pi c_1(Z)\cdot\alpha}{\alpha^2}
        =
        \frac{2c_1(Z)\cdot A}{A^2}
        >0,
\]
the measure \(\Omega=\overline S_\alpha\,\omega^2\) belongs to the admissible class considered in \cite{Sha2026Variational}. For \(\phi\in\Hpot_\omega\), the corresponding prescribed scalar-curvature measure functional is
\begin{align*}
        \mathcal F_{\omega,\Omega}(\phi)
        &=
        \mathcal M_\omega(\phi)
        -
        \overline S_\alpha E_\omega(\phi)
        +
        \int_Z\phi\,\Omega  \\
        &=
        \mathcal M_\omega(\phi)
        +
        \overline S_\alpha
        \left(
        \int_Z\phi\,\omega^2-E_\omega(\phi)
        \right),
\end{align*}
where \(\mathcal M_\omega\) is the Mabuchi \(K\)-energy and \(E_\omega\) is the Aubin-Yau energy. After dividing both sides by \(2V_\alpha\), the second term is the Aubin \(J\)-functional, and we denote these normalized functionals by
\[
 M(\phi)
        :=
        \frac{\mathcal M_\omega(\phi)}{2V_\alpha},
        \qquad
        J(\phi)
        :=
        \frac{1}{2V_\alpha}
        \left(
        \int_Z\phi\,\omega^2-E_\omega(\phi)
        \right), \qquad  \Fcal(\phi)
        :=
        \frac{\mathcal F_{\omega,\Omega}(\phi)}{2V_\alpha}.
\]
Thus we can simply write the \(\mathcal F\)-functional as
\begin{equation}
        \Fcal
        =
        M+\overline S_\alpha J.
        \label{eq:F-MJ}
\end{equation}

Since \(A\) is rational, we identify it with the numerical class of an ample \(\mathbb Q\)-line bundle on \(Z\). Recall that a normal ample test configuration for \((Z,A)\) consists of a normal variety \(\mathcal Z\), a flat projective morphism \(\pi:\mathcal Z\rightarrow\mathbb A^1\), a \(\mathbb C^*\)-action on \(\mathcal Z\) lifting the standard action on \(\mathbb A^1\), and a \(\mathbb C^*\)-linearized relatively ample \(\mathbb Q\)-line bundle \(\mathcal A\), together with an equivariant identification
\[
        (\mathcal Z,\mathcal A)|_{\mathbb C^*}
        \simeq
        (Z,A)\times\mathbb C^*;
\]
see \cite[Definitions~2.1 and~2.2]{BHJ2017}. For such a test configuration, we define
\begin{equation}
        \Fcal^{\NA}(\mathcal Z,\mathcal A)
        :=
        M^{\NA}(\mathcal Z,\mathcal A)
        +
        \overline S_\alpha
        J^{\NA}(\mathcal Z,\mathcal A),
        \label{eq:def-FNA}
\end{equation}
where \(M^{\NA}\) and \(J^{\NA}\) are the non-Archimedean Mabuchi \(K\)-energy and Aubin \(J\)-functionals of \cite[Definitions~7.13 and~7.6]{BHJ2017}.

\begin{lemma}
\label{lem:PSC-NA-slope}
If \(\alpha\) contains a K\"ahler metric of positive scalar curvature, then there exists a constant \(\delta>0\) such that
\begin{equation}
        \Fcal^{\NA}(\mathcal Z,\mathcal A)
        \geq
        \delta J^{\NA}(\mathcal Z,\mathcal A)
        \label{eq:uniform-NA}
\end{equation}
for every normal ample test configuration
\((\mathcal Z,\mathcal A)\) for \((Z,A)\).
\end{lemma}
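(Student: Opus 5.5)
The plan is to go through the Archimedean functional \(\Fcal\). First prove a coercivity estimate along the space of potentials, then pass to the non-Archimedean slope along geodesic rays generated by test configurations.

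\emph{Step 1: coercivity.} Invoke the variational characterization \cite[Theorem~A]{Sha2026Variational} for the admissible measure \(\Omega=\overline S_\alpha\,\omega^2\). A K\"ahler metric \(\omega_0\in\alpha\) with \(S(\omega_0)>0\) is exactly a critical-type object for the prescribed scalar curvature measure problem: after adjusting \(\Omega\) if necessary, \(S(\omega_0)\omega_0^2\) is a positive measure in the admissible class. That theorem then gives properness of \(\mathcal F_{\omega,\Omega}\). Concretely, I would extract constants \(\delta>0\) and \(C>0\) with
\[
\Fcal(\phi)\ \ge\ \delta J(\phi)-C\qquad\text{for all }\phi\in\Hpot_\omega .
\]
If Theorem~A is phrased only as an equivalence between existence of PSC metrics and some form of properness of \(\mathcal F\), I would use the version of it stated in terms of \(J\)-coercivity. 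This is possible because, on a surface with \(\overline S_\alpha>0\), the term \(\overline S_\alpha J\) already contributes positively.

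\emph{Step 2: slope formulas.} For a normal ample test configuration \((\mathcal Z,\mathcal A)\), take the associated psh geodesic ray \(\phi_t\) (Phong--Sturm), or a smooth subgeodesic ray built from a smooth \(S^1\)-invariant metric on \(\mathcal A\). By \cite{BHJ2017}, we have
\[
\lim_{t\to\infty}\frac{J(\phi_t)}{t}=J^{\NA}(\mathcal Z,\mathcal A),
\qquad
\lim_{t\to\infty}\frac{M(\phi_t)}{t}=M^{\NA}(\mathcal Z,\mathcal A).
\]
For the Mabuchi energy, BHJ give \(\le\) with equality after normalizing by the relative canonical divisor; in general \(M^{\NA}\) defined via \(K^{\log}\) satisfies \(\lim M(\phi_t)/t=M^{\NA}\) when \(\mathcal Z\) is normal, by \cite[Theorem~3.6]{BHJ2019}. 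Since \(\overline S_\alpha\) is a constant, this yields \(\lim \Fcal(\phi_t)/t=\Fcal^{\NA}(\mathcal Z,\mathcal A)\) by \eqref{eq:F-MJ}.

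\emph{Step 3: conclusion.} Divide the inequality from Step 1 by \(t\) along the ray and let \(t\to\infty\). This gives \(\Fcal^{\NA}\ge\delta J^{\NA}\), which is \eqref{eq:uniform-NA}. Here \(\delta\) is independent of the test configuration because it comes only from Step 1.

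\emph{Main obstacle.} The hard part is the precise form of coercivity in Step 1, and in particular making the slope limit in Step 2 legitimate. If Theorem~A only gives properness in the weaker form \(\Fcal\ge \varepsilon J-C\) on \(\Hpot_\omega\), that is enough. What needs checking is that the Mabuchi slope formula holds for the normal, possibly non-smooth, total space \(\mathcal Z\). If only an inequality \(\lim M(\phi_t)/t\le M^{\NA}\) were available, I would instead pass to a resolution or semistable reduction. Alternatively, note that the inequality goes in the right direction, \(\delta J^{\NA}\le \lim \Fcal(\phi_t)/t\le \Fcal^{\NA}\), which suffices.
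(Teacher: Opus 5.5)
Your proposal follows the same route as the paper: coercivity of \(\Fcal\) from \cite[Theorem~A]{Sha2026Variational}, then dividing by \(s\) along the subgeodesic ray induced by a smooth \(S^1\)-invariant metric on \(\mathcal A\), and using the slope formulas of \cite{BHJ2019}. One justification in Step~1 is wrong and should be replaced: the paper reads Theorem~A as giving \(d_1\)-coercivity on \(\mathcal E^1_0\), and the bound \(\Fcal\ge\delta J-C\) then comes from Darvas's comparison \(d_1(0,\varphi)\ge C_J^{-1}J(\varphi)\) \cite[Eq.~(62)]{Darvas2015}, not from the nonnegativity of \(\overline S_\alpha J\), which gives no control over the Mabuchi term.
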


\begin{proof}
Since \(\alpha\) contains a PSC K\"ahler metric, by \cite[Theorem~A]{Sha2026Variational}, the \(\Fcal\)-functional is \(d_1\)-coercive on \(\mathcal E^1_0(Z,\omega)\). Thus there exist constants \(\varepsilon>0\) and \(C>0\) such that
\[
        \Fcal(\varphi)
        \geq
        \varepsilon d_1(0,\varphi)-C.
\]
On \(\mathcal E^1_0(Z,\omega)\), \cite[Eq.~(62)]{Darvas2015} gives a constant \(C_J>0\) such that
\[
        d_1(0,\varphi)
        \geq
        C_J^{-1}J(\varphi).
\]
Consequently,
\begin{equation}
        \Fcal(\varphi)
        \geq
        \varepsilon C_J^{-1}J(\varphi)-C.
        \label{eq:F-J-coercivity}
\end{equation}

Let \((\mathcal Z,\mathcal A)\) be a normal ample test
configuration, and let \(\varphi_s\) be the ray induced by a smooth strictly positive \(S^1\)-invariant metric on \(\mathcal A\) near the central fibre.  Set
\(\widehat\varphi_s:=\varphi_s -E_\omega(\varphi_s)/V_\alpha\).
Then \(\widehat\varphi_s\in\mathcal E^1_0(Z,\omega)\).  Since \(\Fcal\) and \(J\) are invariant under addition of constants,
\[
        \Fcal(\widehat\varphi_s)=\Fcal(\varphi_s),
        \qquad
        J(\widehat\varphi_s)=J(\varphi_s).
\]
Applying \eqref{eq:F-J-coercivity}, dividing by \(s\), and letting \(s\to+\infty\), the slope formulas
\cite[Theorem~A]{BHJ2019} give
\[
        \Fcal^{\NA}(\mathcal Z,\mathcal A)
        \geq
        \varepsilon C_J^{-1}
        J^{\NA}(\mathcal Z,\mathcal A).
\]
This proves the assertion with \(\delta=\varepsilon C_J^{-1}\).
\end{proof}

Let
\(\pi:(\mathcal Z,\mathcal A)\rightarrow\mathbb A^1\)
be a normal ample test configuration for \((Z,A)\). The
compactification
\(\overline\pi:
        (\overline{\mathcal Z},\overline{\mathcal A})
        \rightarrow\mathbb P^1\)
is obtained by gluing \((\mathcal Z,\mathcal A)\) to
\((Z,A)\times(\mathbb P^1\setminus\{0\})\)
along their common restriction over \(\mathbb C^*\); see
\cite[Definition~2.7]{BHJ2017}.  Thus
\(\overline{\mathcal Z}\) is a normal threefold, \(\overline{\mathcal A}\) is the resulting relatively ample \(\mathbb Q\)-line bundle, and
\[
        (\overline{\mathcal Z},\overline{\mathcal A})
        |_{\mathbb P^1\setminus\{0\}}
        \simeq
        (Z,A)\times(\mathbb P^1\setminus\{0\}).
\]
For the test configurations considered below, the birational map
\(\overline{\mathcal Z}\dashrightarrow Z\times\mathbb P^1
\) induced by this identification extends to a
\(\mathbb C^*\)-equivariant morphism
\(\rho:\overline{\mathcal Z}\rightarrow Z\times\mathbb P^1\) over \(\mathbb P^1\). In this case, we say that
\(\overline{\mathcal Z}\) dominates the trivial compactification. Let
\(p_Z:Z\times\mathbb P^1\rightarrow Z\), \(p_{\mathbb P^1}:Z\times\mathbb P^1\rightarrow\mathbb P^1\)
be the two projections.  Then
\(\overline\pi=p_{\mathbb P^1}\circ\rho\), and we set
\(H:=\rho^*p_Z^*A\).

Let \(\mathcal Z_0=\overline\pi^{-1}(0)\) denote the scheme-theoretic central fibre. Replacing \(\overline{\mathcal A}\) by
\(\overline{\mathcal A}+q\mathcal Z_0\), for \(q\in\mathbb Q\), corresponds to translating the associated non-Archimedean metric. Since \(J^{\NA}\) and \(\Fcal^{\NA}\) are translation invariant, and
since \(\mathcal Z_0\cdot H^2=A^2\), we may translate \(\overline{\mathcal A}\) so that
\begin{equation}
        \overline{\mathcal A}\cdot H^2=0.
        \label{eq:NA-normalization}
\end{equation}
Since \(\dim Z=2\), by \cite[Definition 6.11]{BHJ2017}, the non-Archimedean Aubin-Yau energy is defined by
\[
        E^{\NA}(\mathcal Z,\mathcal A)
        :=
        \frac{\overline{\mathcal A}^{\,3}}{3A^2}.
\]
Moreover, \cite[Definitions~7.2 and~7.6]{BHJ2017} gives
\[
        J^{\NA}(\mathcal Z,\mathcal A)
        =
        \frac{\overline{\mathcal A}\cdot H^2}{A^2}
        -
        E^{\NA}(\mathcal Z,\mathcal A).
\]
Define the relative log canonical divisor by
\begin{equation}
        K^{\log}_{\overline{\mathcal Z}/\mathbb P^1}
        :=
        K_{\overline{\mathcal Z}/\mathbb P^1}
        -
        \bigl(
        \mathcal Z_0-\mathcal Z_{0,\mathrm{red}}
        \bigr).
        \label{eq:def-relative-log-canonical}
\end{equation}
By \cite[Definition~7.13]{BHJ2017},
\[
        M^{\NA}(\mathcal Z,\mathcal A)
        =
        \frac{
        K^{\log}_{\overline{\mathcal Z}/\mathbb P^1}
        \cdot\overline{\mathcal A}^{\,2}
        }{A^2}
        +
        \overline S_\alpha
        E^{\NA}(\mathcal Z,\mathcal A).
\]
Combining these formulas with \eqref{eq:def-FNA} and
\eqref{eq:NA-normalization}, we obtain
\begin{align}
        \Fcal^{\NA}(\mathcal Z,\mathcal A)
        =
        \frac{
        K^{\log}_{\overline{\mathcal Z}/\mathbb P^1}
        \cdot\overline{\mathcal A}^{\,2}
        }{A^2}, \qquad J^{\NA}(\mathcal Z,\mathcal A)
        =
        -\frac{\overline{\mathcal A}^{\,3}}{3A^2}.
        \label{eq:JNA-FNA}
\end{align}
If the central fibre is reduced scheme-theoretically, then
\(\mathcal Z_0=\mathcal Z_{0,\mathrm{red}}\), and hence
\(K^{\log}_{\overline{\mathcal Z}/\mathbb P^1}
        =
        K_{\overline{\mathcal Z}/\mathbb P^1}\).

\subsection{Deformation to the normal cone}

Let \(X=\Pj(E)\rightarrow\Sigma_g\), for \(g \ge 1\), where \(E\) is unstable, and let \(C\) be the Harder--Narasimhan section.

\begin{proposition}
\label{prop:HN-obstruction}
If the K\"ahler class \(2\pi A=2\pi(a\zeta+bF)\)
contains a PSC K\"ahler metric, then
\begin{equation}
        2b>(\Delta(E)+2g-2)a.
        \label{eq:HN-obstruction}
\end{equation}
\end{proposition}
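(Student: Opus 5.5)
The plan is to test the uniform inequality \eqref{eq:uniform-NA} of Lemma~\ref{lem:PSC-NA-slope} on the deformation to the normal cone of the Harder--Narasimhan section \(C\), and to send the parameter to the edge of the admissible range. By openness of \(\Kah_X^{\mathrm{psc}}\) and homogeneity of \eqref{eq:HN-obstruction} in \((a,b)\), I may assume \(a,b\in\mathbb Q\), so that \(A\) is a \(\mathbb Q\)-ample class and Lemma~\ref{lem:PSC-NA-slope} applies with \(Z=X\).

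\emph{Construction.} Let
\[
\rho:\overline{\mathcal Z}=\mathrm{Bl}_{C\times\{0\}}(X\times\Pj^1)\to X\times\Pj^1
\]
be the blowup, with exceptional divisor \(\mathcal E\). For rational \(0<c<c_{\max}\), set \(\overline{\mathcal A}_c:=H-c\mathcal E\), where \(H=\rho^*p_X^*A\).

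\begin{itemize}[label=\(\cdot\)]
\item The upper bound \(c_{\max}\) is the largest \(c\) for which \(\overline{\mathcal A}_c\) is relatively ample. On the central fibre this reduces to ampleness of \(A-cC\) on \(X\) together with positivity on the fibres of \(\mathcal E\to C\). Using \eqref{eq:kahler-cone} and \eqref{eq:HN-section}, \(A-cC=(a-c)\zeta+(b+c\Delta/2)F\), so I expect \(c_{\max}=a\).
\item The central fibre is \(\widetilde{X}_0+\mathcal E\), which is reduced. Hence \(K^{\log}=K_{\overline{\mathcal Z}/\Pj^1}=\rho^*p_X^*K_X+\mathcal E\).
\item \(\overline{\mathcal Z}\) dominates the trivial compactification, and \(\overline{\mathcal A}_c\cdot H^2=-c\,\mathcal E\cdot H^2=0\). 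So the normalization \eqref{eq:NA-normalization} already holds, and \eqref{eq:JNA-FNA} applies directly.
\end{itemize}

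\emph{Intersection numbers.} The needed inputs are the standard ones for the blowup of a smooth curve in a threefold:
\[
H^2\mathcal E=0,\qquad H\,\rho^*K_X\,\mathcal E=0,\qquad H\mathcal E^2=-A\cdot C,\qquad \rho^*K_X\cdot\mathcal E^2=-K_X\cdot C,
\]
together with \(\mathcal E^3=-\deg N_{C/X\times\Pj^1}+2-2g\). Here \(N_{C/X\times\Pj^1}=N_{C/X}\oplus\mathcal O_C\), so its degree is \(C^2=-\Delta(E)\). From \eqref{eq:ruled-c1} and \eqref{eq:HN-section},
\[
A\cdot C=b-\tfrac{\Delta}{2}a,\qquad K_X\cdot C=\Delta+2g-2.
\]
Expanding then expresses \(A^2J^{\NA}(c)=-\overline{\mathcal A}_c^{\,3}/3\) and \(A^2\Fcal^{\NA}(c)=(\rho^*K_X+\mathcal E)\cdot\overline{\mathcal A}_c^{\,2}\) as explicit polynomials in \(c\) of degree \(3\) and \(2\) respectively. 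The term \(2c\,(A\cdot C)\) in \(\Fcal^{\NA}\) is positive, while the \(c^2\)-term carries the genus contribution from \(K_X\cdot C\) and \(\mathcal E^3\).

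\emph{Conclusion.} If \(2\pi A\) contains a PSC metric, Lemma~\ref{lem:PSC-NA-slope} gives \(\Fcal^{\NA}(c)\ge\delta J^{\NA}(c)\) for all rational \(c\in(0,c_{\max})\). Moreover \(J^{\NA}(c)>0\), since the test configuration is nontrivial. Letting \(c\uparrow c_{\max}\) through rationals and using continuity of the polynomials gives \(\Fcal^{\NA}(c_{\max})\ge\delta J^{\NA}(c_{\max})>0\). The aim is that \(\Fcal^{\NA}(c_{\max})\), after dividing by a positive factor \(a\), is a positive multiple of \(2b-(\Delta(E)+2g-2)a\). The strict inequality \eqref{eq:HN-obstruction} then follows, the strictness coming from \(\delta J^{\NA}>0\).

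\emph{Main obstacle.} The main difficulty is bookkeeping rather than concept: getting the signs and values of \(\mathcal E^3\) and \(K^{\log}\) exactly right, and pinning down \(c_{\max}\). A first pass with the formulas above produced a \(c^2\)-coefficient \(4-4g\), and the resulting endpoint value did not visibly match \eqref{eq:HN-obstruction}. So I would recheck \(\mathcal E^3\) via \(\mathcal E\cong\Pj(N)\) and \(\mathcal O_{\mathcal E}(\mathcal E)=\mathcal O_{\Pj(N)}(-1)\). I would also recheck whether relative ampleness permits \(c\) up to \(a\), or only up to a smaller value, and whether one should instead optimize over \(c\) (maximizing \(-\Fcal^{\NA}/J^{\NA}\)) rather than evaluate at the endpoint.
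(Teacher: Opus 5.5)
\textbf{Verdict: the approach is the paper's, but the proposal does not close, because one intersection number is wrong.} Your strategy matches the paper step for step: the deformation to the normal cone of the Harder--Narasimhan section with $\overline{\mathcal A}_c=H-c\mathcal E$, then Lemma~\ref{lem:PSC-NA-slope} for rational $c\in(0,a)$, then $c\nearrow a$, with strictness coming from $J^{\NA}(a)>0$.

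\textbf{The error.} For the blowup of a smooth curve in a smooth threefold one has $\mathcal E^3=-\deg N_{C/X\times\Pj^1}$, with no $2-2g$ term. To see this, note $\mathcal E\cong\Pj(N)$ with $N=N_{C/X}\oplus\mathcal O_C$, and $\mathcal E|_{\mathcal E}=-h$ for $h=c_1(\mathcal O_{\Pj(N)}(1))$. The Grothendieck relation $h^2+c_1(N)h=0$ then gives $\mathcal E^3=h^2=-\deg N=-C^2=\Delta(E)$. You appear to have merged the two equivalent expressions $-\deg N$ and $K_{X\times\Pj^1}\cdot C+2-2g$. A quick check is the blowup of a line in $\Pj^3$, where $\mathcal E^3=-2$ while your formula gives $0$. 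This extra $2-2g$ is exactly the source of your coefficient $4-4g$.

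\textbf{The corrected computation.} With the correct value,
\[
(\rho^*p_X^*K_X+\mathcal E)\cdot(H-c\mathcal E)^2=-c^2K_X\cdot C+2c\,A\cdot C+c^2\Delta(E)=2c\,A\cdot C+(2-2g)c^2,
\]
and $(H-c\mathcal E)^3=-3c^2A\cdot C-c^3\Delta(E)$. Hence
\[
A^2\,\Fcal^{\NA}(a)=2a\Bigl(b-\tfrac{\Delta(E)}{2}a\Bigr)+(2-2g)a^2=a\bigl(2b-(\Delta(E)+2g-2)a\bigr),
\]
while $A^2J^{\NA}(a)=a^2\bigl(A\cdot C+\tfrac{\Delta(E)}{3}a\bigr)>0$. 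This is precisely the paper's computation, and it yields \eqref{eq:HN-obstruction}.

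\textbf{Your other worries are not issues.}
\begin{itemize}
\item \emph{The range of $c$.} Since $C$ is a Cartier divisor, its Seshadri constant is $\sup\{c:A-cC\text{ nef}\}=a$; the binding constraint is $(A-cC)\cdot F=a-c$. So Ross--Thomas gives relative ampleness on all of $(0,a)$.
\item \emph{Optimizing over $c$.} No optimization is needed. The quantity $A^2\Fcal^{\NA}(c)/c=2\bigl(A\cdot C-(g-1)c\bigr)$ is nonincreasing for $g\ge1$, so the endpoint $c=a$ is the sharpest test.
\item \emph{Reduction to rational classes.} Homogeneity plays no role here. What you need is this: if a real PSC class satisfied $2b\le(\Delta(E)+2g-2)a$, openness of the PSC condition would give a rational PSC class with $2b'<(\Delta(E)+2g-2)a'$, contradicting the rational case. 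The paper makes the same argument in two steps, a limit and then a perturbation.
\end{itemize}
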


\begin{proof}
Assume first that \(A\) is rational.  Consider the compactified
deformation to the normal cone
\begin{equation}
        \rho:\overline{\mathcal X}
        =
        \operatorname{Bl}_{C\times\{0\}}
        (X\times\Pj^1)
        \longrightarrow
        X\times\Pj^1,
        \label{eq:DNC-HN}
\end{equation}
and denote its exceptional divisor by \(P\).  Set
\begin{equation}
        H=\rho^*p_X^*A,
        \qquad
        \overline{\mathcal A}_c=H-cP.
        \label{eq:DNC-class}
\end{equation}
The Seshadri constant of \(C\) with respect to \(A\) is \(a\).
Indeed,
\[
        A-cC
        =
        (a-c)\zeta
        +
        \left(b+\frac{c\Delta(E)}{2}\right)F
\]
is K\"ahler for \(0<c<a\), while for \(c>a\), \((A-cC)\cdot F <0\). It follows from \cite[Definition~3.2 and Lemma~4.1]{RossThomas2006} that \(\overline{\mathcal A}_c\) is relatively ample for every rational \(c\in(0,a)\).  Thus
\((\overline{\mathcal X},\overline{\mathcal A}_c)\) is the compactification of an ample test configuration for \((X,A)\). Moreover, \(\overline{\mathcal A}_c\cdot H^2=0\), so the normalization \eqref{eq:NA-normalization} is automatic.

The total space is smooth and its central fibre is reduced.  Since the blowup center has codimension two,
\[
        K_{\overline{\mathcal X}/\Pj^1}
        =
        \rho^*p_X^*K_X+P.
\]
The intersection numbers give
\begin{equation*}
        H^2\cdot P=0,
        \quad
        H\cdot P^2=-A\cdot C,
        \quad
        \rho^*p_X^*K_X\cdot P^2=-K_X\cdot C,
        \quad
        P^3=-\deg N_{C/X}=\Delta(E).
\end{equation*}
By the adjunction formula
\[
        K_X\cdot C
        =
        2g-2-C^2
        =
        2g-2+\Delta(E).
\]
Therefore, we obtain
\begin{align}
        K_{\overline{\mathcal X}/\Pj^1}
        \cdot\overline{\mathcal A}_c^{\,2}
        =
        2cA\cdot C+(2-2g)c^2, \qquad
        \overline{\mathcal A}_c^{\,3}
        =
        -3c^2A\cdot C-c^3\Delta(E).
        \label{eq:HN-A-cube-F-numerator}
\end{align}
Equation \eqref{eq:JNA-FNA} therefore yields, after regarding \(\Fcal^{\NA},J^{\NA}\) as functions depending on \(c\), that
\begin{align}
        \Fcal^{\NA}(c)
        =
        \frac{
        2cA\cdot C+(2-2g)c^2
        }{A^2},\qquad
        J^{\NA}(c)
        =
        \frac{c^2}{A^2}
        \left(
        A\cdot C+\frac{\Delta(E)}{3}c
        \right).
        \label{eq:HN-JNA-FNA}
\end{align}

Since \(A\) is K\"ahler, \(A\cdot C>0\), and hence
\[
        \lim_{c\nearrow a}J^{\NA}(c)>0.
\]
If \(2\pi A\) contains a PSC K\"ahler metric, apply Lemma~\ref{lem:PSC-NA-slope} for rational \(c\in(0,a)\) and let \(c\nearrow a\).  By \eqref{eq:HN-JNA-FNA},
\[
        2aA\cdot C+(2-2g)a^2>0.
\]
Since \(A\cdot C=b-\frac{\Delta(E)}{2}a\), we obtain \eqref{eq:HN-obstruction} for rational class.

Now let \(A=a\zeta+bF\) be an arbitrary real K\"ahler class such that \(2\pi A\) contains a PSC K\"ahler metric \(\omega\). Then there exists a neighborhood \(U\) of \(A\) such that, for \(a'\), \(b'\) sufficiently close to \(a\), \(b\), \(A'=a'\zeta +b'F \in U\) is still K\"ahler and contains a PSC K\"ahler metric. Since rational numbers are dense in real numbers, we can choose \(a'_i,b'_i \in \mathbb Q\) such that \(a'_i \to a\), \(b'_i\to b\) as \(i\to +\infty\). Hence,
\[
  2b-(\Delta(E)+2g-2)a=\lim_{i\to +\infty}\left(2b'_i-(\Delta(E)+2g-2)a'_i\right) \ge 0.
\]
Suppose that \(2b=(\Delta(E)+2g-2)a\). Then we can choose \(\tilde a,\tilde b \in \mathbb Q\) sufficiently close to \(a,b\) with \(\tilde b <b\) and \(\tilde a >a\), so that \(2\pi(\tilde a \zeta +\tilde b F)\) is K\"ahler and has positive scalar curvature. Clearly, \(2\tilde b-(\Delta(E)+2g-2)\tilde a < 0\) which contradicts \eqref{eq:HN-obstruction} for rational class. Hence 
\[
2b-(\Delta(E)+2g-2)a >0,
\]
which is exactly \eqref{eq:HN-obstruction}.
\end{proof}

\section{Proof of Theorem \ref{thm:main-minimal}}
\label{sec:minimal-proof}

In this section, we give the proof of Theorem \ref{thm:main-minimal}. We begin with rational surfaces.

\begin{lemma}
\label{lem:rational-minimal}
If \(X=\Pj^2\) or \(X=\mathbb F_n\) with \(n=0\) or \(n\ge2\), then
\[
\Kah_X^{\mathrm{psc}}
=
\mathcal T_X^+
=
\Kah_X.
\]
\end{lemma}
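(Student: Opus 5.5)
Since \(\Kah_X^{\mathrm{psc}}\subseteq\mathcal T_X^+\subseteq\Kah_X\) always holds, it suffices to show two things: first that \(\mathcal T_X^+=\Kah_X\), and second that every K\"ahler class contains a metric of positive scalar curvature.

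\textbf{The cone \(\mathcal T_X^+\).} In all listed cases \(-K_X\) is big, and indeed \(c_1(X)\) lies in the interior of the effective cone. For \(\Pj^2\) we have \(c_1=3h\), so \(c_1\cdot\alpha>0\) for every K\"ahler \(\alpha\). For \(\mathbb F_n\), write \(C_0\) for the negative section (\(C_0^2=-n\)) and \(F\) for the fibre. Then
\[
c_1(X)=2C_0+(n+2)F,
\]
a positive combination of effective curves. Hence \(c_1(X)\cdot\alpha=2\,C_0\cdot\alpha+(n+2)\,F\cdot\alpha>0\) for K\"ahler \(\alpha\), and so \(\mathcal T_X^+=\Kah_X\).

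\textbf{Existence of PSC metrics: the symmetric cases.}
\begin{itemize}
\item On \(\Pj^2\), every K\"ahler class is a multiple of the Fubini--Study class, and a rescaled Fubini--Study metric has positive scalar curvature.
\item On \(\mathbb F_0=\Pj^1\times\Pj^1\), the class \(a[\omega_{FS}]\oplus b[\omega_{FS}]\) contains the product metric, whose scalar curvature is a positive sum of the two factors' curvatures.
\end{itemize}

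\textbf{Existence of PSC metrics: \(\mathbb F_n\) with \(n\ge2\).} Here I would use \(\mathbb F_n=\Pj(\mathcal O\oplus\mathcal O(-n))\) and Calabi's momentum construction, which is presumably the same mechanism as the appendix but over the base \(\Pj^1\). Every K\"ahler class is \(2\pi(a\zeta+bF)\) with \(b>\tfrac n2 a\), and for each such class I would write down an explicit \(U(2)\)-invariant (or at least \(S^1\)-invariant) Calabi ansatz metric. The ansatz is
\[
\omega=p^*\omega_{\Sigma}(x)+\frac{dx\wedge d^c x}{\Theta(x)}+\Theta(x)\,\theta\wedge\theta,
\]
with momentum \(x\in[-1,1]\) scaled to the class. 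The scalar curvature is then
\[
S=\frac{2}{1+\lambda x}\Bigl(\text{base term}\Bigr)-\frac{1}{1+\lambda x}\,\bigl((1+\lambda x)\Theta\bigr)''.
\]
Because the base \(\Pj^1\) has positive curvature, choosing \(\Theta\) close to the standard profile \(\Theta=1-x^2\) (suitably modified) should give \(S>0\).

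Alternatively, and more robustly, one can avoid explicit computation by deformation. The Hirzebruch surface \(\mathbb F_n\) deforms to \(\mathbb F_{n-2k}\), and the cone of classes \(\alpha\) admitting PSC metrics is open. However, the K\"ahler cones of \(\mathbb F_n\) and \(\mathbb F_0\) differ, so deformation alone does not reach every class of \(\mathbb F_n\); the explicit construction is needed.

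\textbf{Main obstacle.} I expect the main difficulty to be verifying positivity of \(S\) for the momentum profile near the boundary of the K\"ahler cone, that is as \(b\to\tfrac n2 a\), where the negative section shrinks. My first attempt is to use the extremal (Calabi) polynomial profile. Because \(\mathbb F_n\) is toric with positive first Chern class in interior directions, I would check its scalar curvature, which is affine in \(x\), at the two endpoints \(x=\pm1\). Each endpoint value reduces to an inequality between the curvatures of the curves \(C_0\) and \(C_\infty\) and the class parameters, and should follow from \(c_1\cdot C_0=2-n\) together with the area ratios.

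If the endpoint check fails for large \(n\), I would instead use a non-extremal \(\Theta\). Its boundary conditions force \(\Theta(\pm1)=0\) and \(\Theta'(\pm1)=\mp2\). Inside these constraints I would choose \(\Theta\) concave enough that \(-\bigl((1+\lambda x)\Theta\bigr)''\) dominates, which I expect to be possible because the base term is positive throughout.
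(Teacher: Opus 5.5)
Your route differs from the paper's and can be made to work. As written, though, the only nontrivial case, \(\mathbb F_n\) with \(n\ge2\), is not proved: you assert that a suitable profile ``should'' exist, and your primary candidate in fact fails. Use the notation of the appendix with base \(\Pj^1\), so \(d=n\), \(s=2/n\), the class is \(\alpha_{\lambda,m}\), and the wall \(b\searrow\frac n2a\) corresponds to \(\lambda/m\searrow0\). Normalize \(m=1\) by scaling. The extremal profile has \(S=\alpha_0+\alpha_1 r\). Solving the two moment conditions imposed by \eqref{eq:G-endpoint} gives
\[
S|_{C_0}=\alpha_0=\frac{6\,(2\lambda^2+s\lambda+s-1)}{6\lambda^2+6\lambda+1}\longrightarrow 6(s-1)=\frac{6\,c_1(X)\cdot C_0}{n}=\frac{6(2-n)}{n}\qquad(\lambda\searrow0),
\]
which is negative for every \(n\ge3\). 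So the endpoint check fails near the wall, exactly where you located the difficulty. Your ``fallback'' is therefore the whole proof, and leaving it as an expectation is the gap. (Minor: your displayed ansatz contains \(\Theta\,\theta\wedge\theta=0\); you mean the metric term \(\Theta\,\theta^2\).)

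The fallback is easy to supply, but not by making the profile ``concave enough'': the boundary data fix \(\int_0^m G''=-(2\lambda+m)\), so concavity cannot be increased at will. The real point is that on \(\Pj^1\) the base curvature \(s\) is positive, so the PSC condition \(G''<s\) from \eqref{eq:admissible-scalar-curvature} already follows from \(G''<0\).

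Take any smooth \(G\) on \([0,m]\) with \(G''<0\), \(G(0)=G(m)=0\), \(G'(0)=\lambda\) and \(G'(m)=-(\lambda+m)\). Such a \(G\) exists because \(\lambda>0>-(\lambda+m)\), so a strictly decreasing \(G'\) with these end values can be chosen with mean zero. Then \(G>0\) on \((0,m)\), \(\phi=G/(\lambda+r)\) satisfies \eqref{eq:phi-endpoint}, and \(S>0\). Equivalently, Lemma~\ref{lem:admissible-positive-scalar-curvature} goes through verbatim for \(g=0\), where its criterion \(2\lambda+sm>0\) holds automatically.

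With this step supplied, your argument is a self-contained alternative to the paper's proof. The paper simply invokes \cite[Theorem~C]{Sha2026Variational}: every K\"ahler class on a compact toric manifold contains a torus-invariant PSC metric. It then concludes from \(\Kah_X^{\mathrm{psc}}\subseteq\mathcal T_X^+\subseteq\Kah_X\), so your separate check that \(\mathcal T_X^+=\Kah_X\), though correct, is unnecessary. The citation treats \(\Pj^2\) and all \(\mathbb F_n\) at once but rests on the variational machinery. Your construction is elementary and shows transparently why no obstruction appears here, in contrast with the case \(s<0\) (that is, \(g\ge2\)) of Proposition~\ref{prop:S1-invariant-psc-cone}.
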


\begin{proof}
The projective plane and the Hirzebruch surfaces are compact smooth toric K\"ahler manifolds.  By \cite[Theorem~C]{Sha2026Variational}, every K\"ahler class on a compact smooth toric manifold contains a torus-invariant PSC K\"ahler metric. Hence \(\Kah_X^{\mathrm{psc}}=\Kah_X\), which completes the proof.
\end{proof}

We next turn to non-rational minimal ruled surfaces.  We first treat the semistable case.

\begin{lemma}
\label{lem:semistable-cone}
Let \(X=\Pj(E)\to\Sigma_g\), where \(g\ge1\) and \(E\) is
slope-semistable.  Then
\begin{equation}
\Kah_X^{\mathrm{psc}}
=
\mathcal T_X^+
=
\left\{
2\pi(a\zeta+bF):
a>0,\quad b>(g-1)a
\right\}.
\label{eq:semistable-PSC-cone}
\end{equation}
\end{lemma}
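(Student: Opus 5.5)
For semistable \(E\) we have \(\Delta(E)=0\), so \eqref{eq:total-cone-ruled} shows that \(\mathcal T_X^+\) is exactly the right-hand side of \eqref{eq:semistable-PSC-cone}. Since \(\Kah_X^{\mathrm{psc}}\subseteq\mathcal T_X^+\) always holds, it remains to construct, for every \(a>0\) and \(b>(g-1)a\), a PSC K\"ahler metric in \(2\pi(a\zeta+bF)\).

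\emph{Polystable case.} By Narasimhan--Seshadri, \(E\otimes L\) is a flat unitary bundle for a suitable line bundle \(L\). Hence \(X=\Pj(E)\) is a flat \(\Pj^1\)-bundle: \(X=(\widetilde\Sigma_g\times\Pj^1)/\pi_1(\Sigma_g)\), with \(\pi_1\) acting on \(\Pj^1\) through \(PU(2)\). Take a constant curvature metric \(\omega_\Sigma\) on \(\Sigma_g\) of area \(2\pi b\), and the round Fubini--Study metric \(\omega_{FS}\) on \(\Pj^1\) of area \(2\pi a\). Since \(\omega_{FS}\) is \(PU(2)\)-invariant, the local product \(\omega=\omega_{FS}+\omega_\Sigma\) descends to \(X\). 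Its class is \(2\pi(a\zeta+bF)\), because \(\zeta\) restricts to degree \(1\) on fibres and \(\zeta^2=0\) matches the flat structure.

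Its scalar curvature is the sum of the two constant curvatures. In the normalization \(S=\int S\omega^2/\int\omega^2\) computed from \eqref{eq:ruled-total}, it equals
\[
\frac{2(2b+(2-2g)a)}{2ab}\cdot\text{const},
\]
which is positive precisely when \(b>(g-1)a\). This settles the polystable case, and in particular the stable case.

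\emph{Strictly semistable, non-polystable case.} Here \(E\) is a nonsplit extension \(0\to L\to E\to M\to0\) with \(\deg L=\deg M\). I would use a deformation argument. Scaling the extension class to zero gives a holomorphic family \(E_t\) with \(E_0=L\oplus M\) polystable and \(E_t\cong E\) for \(t\neq0\). The surfaces \(X_t=\Pj(E_t)\) form a smooth family, and the classes \(\zeta,F\) are locally constant, so \(H^{1,1}\) does not jump.

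The key input is openness of the PSC condition: a PSC K\"ahler metric \(\omega_0\) on \(X_0\) in class \(2\pi(a\zeta+bF)\) deforms, by Kodaira--Spencer stability of K\"ahler metrics, to K\"ahler metrics \(\omega_t\) on \(X_t\) in the same class, converging smoothly to \(\omega_0\). Hence \(S(\omega_t)>0\) for small \(t\), and thus on \(X\cong X_t\). Since the class is arbitrary in the cone and \(X_0\) is polystable, this handles every class.

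The main obstacle is justifying this deformation step rigorously. One must produce a smooth family of K\"ahler forms in a fixed cohomology class converging to \(\omega_0\). I would do this by taking \(\omega_t=\omega_0+\text{(correction)}\) via a \(C^\infty\)-trivialization of the family, projecting onto the \((1,1)\)-part, and correcting by harmonic theory using the upper semicontinuity of \(h^{1,1}\), which is constant here. Positivity is an open condition under \(C^2\)-small perturbation, so \(S(\omega_t)>0\) follows.
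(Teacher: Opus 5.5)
Your proof is correct and has the same two-step structure as the paper's: the polystable case via locally symmetric product metrics, then degenerating the extension to $L\oplus Q$ and using openness of the PSC condition. There are two differences worth noting.

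\emph{Polystable case.} The paper cites \cite[Theorem~2]{ApostolovTonnesen2006} for a locally symmetric cscK metric in every K\"ahler class. You instead build that metric by hand from the Narasimhan--Seshadri flat structure, which is more self-contained. However, your claim that $E\otimes L$ is flat unitary for some line bundle $L$ fails when $\deg E$ is odd, since $\deg(E\otimes L)=\deg E+2\deg L$ cannot then be zero. The fix is easy. A semistable rank-two bundle of odd degree is stable, and Narasimhan--Seshadri still gives it a projectively flat unitary (Hermitian--Einstein) connection. So $\Pj(E)$ is still a flat $PU(2)$-bundle, and the rest of your argument goes through unchanged: the product metric, the class identification (where $[\omega_{FS}]^2=0$ forces the $F$-coefficient to vanish), and the scalar curvature $2/a+(2-2g)/b$.

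\emph{Deformation step.} The obstacle you flag is handled more simply in the paper than by Kodaira--Spencer harmonic theory. Choose a smooth closed real $(1,1)$-form $\theta$ on the total space $\Pj(\mathcal E)$ representing a class $\mathfrak a$ with $\mathfrak a|_{X_t}=2\pi(a\zeta_t+bF_t)$; such a $\theta$ can be assembled from Chern forms of $\mathcal O_{\Pj(\mathcal E)}(1)$, $\det\mathcal E$ and the fibre class. Then:
\begin{itemize}
\item write the PSC metric on $X_0$ as $\theta|_{X_0}+\ddbar\varphi_0$;
\item extend $\varphi_0$ smoothly to a function $\Phi$ on a neighbourhood of $X_0$;
\item set $\omega_t=(\theta+\ddbar\Phi)|_{X_t}$.
\end{itemize}
This lies in the prescribed class for every $t$ and converges smoothly to $\omega_0$. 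So neither constancy of $h^{1,1}$ nor a Bott--Chern projection is needed. Your harmonic-theory route, fixing the class via smoothly varying harmonic representatives, also works but is heavier.
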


\begin{proof}
Suppose first that \(E\) is polystable.  By
\cite[Theorem~2]{ApostolovTonnesen2006},
\(\Pj(E)\) admits a locally symmetric cscK metric in every K\"ahler
class.  If the class belongs to \(\mathcal T_X^+\), the constant scalar
curvature of this metric is positive.  Thus \(\mathcal T_X^+\subseteq\Kah_X^{\mathrm{psc}}\).

Fix a K\"ahler class \(\alpha =2\pi(a\zeta+bF )\in \ctpsc\) on \(X\). Suppose next that \(E\) is semistable but not polystable. Then \(E\) is strictly semistable and admits the Jordan--H\"older filtration
\[
0\longrightarrow L\longrightarrow E\longrightarrow Q
\longrightarrow0,
\qquad
\deg L=\deg Q=\frac{\deg E}{2}.
\]
Scaling the extension class gives a holomorphic family \(\mathcal E\rightarrow\Sigma_g\times\Delta \) such that
\[
E_0=L\oplus Q,
\qquad
E_t\simeq E
\quad\text{for }t\ne0.
\]
Set \(p:\mathcal X=\Pj(\mathcal E)\rightarrow\Delta\) and \(X_t=\Pj(E_t)\). The normalized tautological class and the fibre class define a class \(\mathfrak a\in H^2(\mathcal X,\mathbb R)\), admitting a smooth closed
real \((1,1)\)-representative, such that \(\mathfrak a|_{X_t}=\alpha_t:=2\pi(a\zeta_t+bF_t)\).

Since \(E_0=L\oplus Q\) is polystable, the central class \(\alpha_0\) contains a positive cscK metric.  
Choose a smooth closed real \((1,1)\)-form \(\theta\in\mathfrak a\). By \(\partial \bar{\partial}\)-lemma, there exists \(\varphi_0\) on \(X_0\) such that \(\omega_0 = \theta|_{X_0} + \ddbar \varphi_0\). After extending \(\varphi_0\) smoothly to a neighborhood of \(X_0\), denote the extension by \(\Phi\). We then define \(\omega_t=\left(\theta + \ddbar \Phi\right)|_{X_t}\). For all sufficiently small \(0<|t|\ll 1\), \(\omega_t\) is still K\"ahler and has positive scalar curvature on \(X_t\). Since \(E_t\simeq E\), the original class \(\alpha\) contains a PSC K\"ahler metric. Finally, \eqref{eq:total-cone-ruled} with \(\Delta(E)=0\) gives
\eqref{eq:semistable-PSC-cone}.
\end{proof}

It remains to determine the cone when \(E\) is unstable.

\begin{lemma}
\label{lem:unstable-cone}
Let \(X=\Pj(E)\to\Sigma_g\), where \(g\ge1\) and \(E\) is unstable. Then
\begin{equation}
\Kah_X^{\mathrm{psc}}
=
\left\{
2\pi(a\zeta+bF):
a>0,\quad
b>\left(g-1+\frac{\Delta(E)}{2}\right)a
\right\}.
\label{eq:unstable-PSC-cone}
\end{equation}
\end{lemma}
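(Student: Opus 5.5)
The inclusion ``\(\subseteq\)'' is exactly Proposition~\ref{prop:HN-obstruction}. If \(2\pi(a\zeta+bF)\in\Kah_X^{\mathrm{psc}}\), then \(a>0\) and \(2b>(\Delta(E)+2g-2)a\). So all the work is in the reverse inclusion: we must produce a PSC K\"ahler metric in every class \(2\pi(a\zeta+bF)\) with \(a>0\) and \(b>(g-1+\Delta(E)/2)a\). Such a class is automatically K\"ahler by \eqref{eq:kahler-cone}, since \(g\ge1\).

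\textbf{Step 1: reduce to the split bundle.} Let \(0\to L\to E\to Q\to0\) be the Harder--Narasimhan filtration. Scaling the extension class gives a holomorphic family \(\mathcal E\to\Sigma_g\times\Delta\) with \(E_0=L\oplus Q\) and \(E_t\simeq E\) for \(t\neq0\), exactly as in the proof of Lemma~\ref{lem:semistable-cone}. Since \(\mu_{\max}\) and \(\deg\) agree for \(E\) and \(L\oplus Q\), we have \(\Delta(E_0)=\Delta(E)\). The classes \(\zeta_t,F_t\) fit into a smooth family, so the two candidate cones are identical for \(X_0\) and \(X_t\). The openness argument of Lemma~\ref{lem:semistable-cone} applies verbatim: extend a PSC K\"ahler potential on \(X_0\) to nearby fibres, and positivity of both the metric and the scalar curvature persists for small \(|t|\). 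Hence it suffices to treat \(E=\mathcal O_{\Sigma_g}\oplus M\) with \(\deg M=-d<0\) and \(d=\Delta(E)\), after twisting by a line bundle, which changes neither \(\zeta\) nor \(\Delta\).

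\textbf{Step 2: construct metrics on the decomposable model.} On \(X=\Pj(\mathcal O\oplus M)\) I would use the Calabi/Hwang--Singer momentum construction with respect to the \(S^1\)-action rotating the fibres. This is presumably the content of Proposition~\ref{prop:S1-invariant-psc-cone} and Appendix~\ref{app:decomposable-model}. Fix a metric \(h\) on \(M\) whose curvature is a constant multiple of a constant-curvature metric \(\omega_\Sigma\) on \(\Sigma_g\) (curvature \(2(1-g)\), normalized area). Then write
\[
\omega=(b_0+\tau)\,\pi^*\omega_\Sigma+\frac{d\tau\wedge d^c\tau}{\Theta(\tau)},\qquad \tau\in[-a',a'],
\]
with the momentum profile \(\Theta>0\) on the interior and \(\Theta(\pm a')=0\), \(\Theta'(\pm a')=\mp2\). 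The parameters \(a',b_0\) are affine in \((a,b)\), with \(b_0-a'\) proportional to \(A\cdot C_0=b-\frac d2a>0\). The scalar curvature is given by the standard formula
\[
S=\frac{2\kappa_\Sigma}{p(\tau)}-\frac{(p\Theta)''}{p},\qquad p(\tau)=b_0+\tau\ (\text{up to the twisting constant}),
\]
where \(\kappa_\Sigma\le0\) corresponds to \(g\ge1\). So PSC reduces to choosing a positive function \(\Theta\) on \((-a',a')\) with the boundary conditions above and \((p\Theta)''<2\kappa_\Sigma\) on \([-a',a']\). Set \(u=p\Theta\). Then \(u\) vanishes at both ends with prescribed derivatives \(\mp2p(\pm a')\), and we need \(u''<2\kappa_\Sigma\) with \(u>0\) inside. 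A strictly concave-enough \(u\) with those endpoint slopes exists if and only if the slope drop \(2p(-a')+2p(a')\) exceeds \(-2\kappa_\Sigma\cdot2a'\), modulo positivity. This inequality is the \(S^1\)-invariant analogue of total scalar curvature, but weighted toward the endpoint \(C_0\). I expect it to translate into exactly \(b>(g-1+d/2)a\), the condition forced by the endpoint with the smaller value \(p(-a')\) corresponding to \(C_0\). Concretely, take \(u\) to be the quadratic \(u''\equiv\)const slightly below \(2\kappa_\Sigma\), corrected near the ends, and check positivity on the interior.

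\textbf{Main obstacle.} The delicate step is Step 2's bookkeeping: matching \((a',b_0,\kappa_\Sigma,\deg M)\) with \((a,b,g,d)\) so that the existence condition for \(u\) becomes precisely \(2b>(d+2g-2)a\), rather than merely \(b>(g-1)a\). I would first do the quadratic ansatz explicitly, since the quadratic with the prescribed end slopes exists exactly when its constant second derivative \(-(p(a')+p(-a'))/a'\) is \(<2\kappa_\Sigma\). I would then verify that this agrees with the cone boundary, i.e.\ equality at the boundary matches Proposition~\ref{prop:HN-obstruction}. Positivity of \(u\) is then automatic for a concave function vanishing at both endpoints.
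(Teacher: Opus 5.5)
Your architecture matches the paper's. The inclusion \(\subseteq\) is Proposition~\ref{prop:HN-obstruction}. For \(\supseteq\), you degenerate the Harder--Narasimhan extension to \(L\oplus Q\), put a PSC momentum metric on the split model, and use openness in \(t\). Step~1 is fine.

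The gap is in Step~2: the mechanism you propose does not produce the claimed cone. The profile \(u=p\Theta\) must satisfy four boundary conditions, \(u(\pm a')=0\), \(u'(-a')=2p(-a')\), \(u'(a')=-2p(a')\). So \(u''\) is subject to two integral constraints, not one.

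\begin{itemize}
\item \textbf{The quadratic ansatz cannot work.} Take \(u''\equiv-(p(a')+p(-a'))/a'\), \(u(-a')=0\), \(u'(-a')=2p(-a')\). Then \(u(a')=2a'\bigl(p(-a')-p(a')\bigr)=-4a'^2\neq0\). Put differently, quadratics vanishing at \(\pm a'\) have symmetric end slopes, while \(p(a')\neq p(-a')\).
\item \textbf{The slope-drop inequality is the wrong criterion.} It is only the zeroth-moment constraint \(\int_{-a'}^{a'}(2\kappa_\Sigma-u'')\,d\tau>0\), i.e.\ positivity of total scalar curvature. It translates into \(b>(g-1)a\), that is, into \(\mathcal T_X^+\). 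Proposition~\ref{prop:HN-obstruction} shows this cone is strictly too large when \(g\ge2\).
\end{itemize}

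So your planned computation would not land on \(2b>(\Delta(E)+2g-2)a\). The ``weighting toward \(C_0\)'' you anticipate is absent from the inequality you actually wrote. Positivity of \(u\) is not its source either: concavity plus vanishing at the ends gives positivity for free.

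The missing ingredient is the first-moment constraint, i.e.\ the condition \(u(a')=0\) expanded by Taylor's formula from the \(C_0\) end. Write \(u''=2\kappa_\Sigma-\mu\) with \(\mu>0\), and use \(p(a')=p(-a')+2a'\). Then
\[
\int_{-a'}^{a'}(a'-\tau)\,\mu(\tau)\,d\tau=4a'\bigl(p(-a')+\kappa_\Sigma a'\bigr),
\qquad
\int_{-a'}^{a'}\mu(\tau)\,d\tau=4\bigl(p(-a')+\kappa_\Sigma a'\bigr)+4a'.
\]
Hence PSC forces \(p(-a')+\kappa_\Sigma a'>0\).

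Conversely, suppose this holds. Then both moments are positive and their ratio lies in \((0,a')\subset(0,2a')\). So a smooth positive \(\mu\) with these two moments exists. The resulting \(u\) is strictly concave and vanishes at \(\pm a'\), hence is positive inside.

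After matching normalizations, this is exactly Lemma~\ref{lem:admissible-positive-scalar-curvature}. Its condition \(2\lambda+sm>0\) becomes \(2b>(\Delta(E)+2g-2)a\) under \(m=a\), \(\lambda=\frac{1}{\Delta(E)}\bigl(b-\frac{\Delta(E)}{2}a\bigr)\). The paper's proof simply invokes that lemma on the central fibre. You should either cite it with this identification, or replace your quadratic/slope-drop step with this two-moment argument.
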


\begin{proof}
Let \(\alpha =2\pi(a\zeta+bF )\) satisfy \(a>0\) and \(b>\left(g-1+\frac{\Delta(E)}2\right)a\). Consider the Harder--Narasimhan filtration
\[
0\longrightarrow L\longrightarrow E\longrightarrow Q
\longrightarrow0,
\qquad
\deg L-\deg Q=\Delta(E)>0.
\]
Scaling the extension class gives a holomorphic family \(\mathcal E\to\Sigma_g\times\Delta\) with
\[
E_0=L\oplus Q,
\qquad
E_t\simeq E
\quad\text{for }t\ne0.
\]
After twisting by \(L^{-1}\), the central ruled surface is
\[
\Pj(E_0)
\simeq
\Pj(\mathcal O_{\Sigma_g}\oplus M),
\qquad
M=Q\otimes L^{-1},
\qquad
\deg M=-\Delta(E).
\]

Set \(X_t=\Pj(E_t)\), and denote by \(\zeta_t\) and \(F_t\) the normalized tautological and fibre classes on \(X_t\). On the decomposable central fibre, in the notation of Appendix~\ref{app:decomposable-model}, set \(m=a\) and \(\lambda=\frac{1}{\Delta(E)}\left(b-\frac{\Delta(E)}{2}a\right)\). Lemma~\ref{lem:admissible-positive-scalar-curvature} therefore gives a PSC K\"ahler metric in the central class \(2\pi(a\zeta_0+bF_0)\). The same argument as in the proof of Lemma \ref{lem:semistable-cone} gives a family of K\"ahler metrics with positive scalar curvature for all sufficiently small \(t>0\) on \(X_t\). Since \(E_t \simeq E\) for \(t \neq 0\), the original class \(\alpha\) contains a PSC K\"ahler metric. Combining with Proposition \ref{prop:HN-obstruction}, we obtain \eqref{eq:unstable-PSC-cone}.
\end{proof}

We can now complete the proof of the classification for minimal surfaces.

\begin{proof}[Proof of Theorem~\ref{thm:main-minimal}]
A minimal compact K\"ahler surface of Kodaira dimension \(-\infty\) is either \(\Pj^2\), a minimal Hirzebruch surface \(\mathbb F_n\) with \(n=0\) or \(n\ge2\), or a geometrically ruled surface over a curve of
positive genus; see \cite[Chapter~VI]{BHPV2004}. Lemma~\ref{lem:rational-minimal} proves the assertion (i).
Now let \(X=\Pj(E)\rightarrow\Sigma_g\) with \(g\ge1\). Lemma~\ref{lem:semistable-cone} computes the PSC cone when \(\Delta(E)=0\), while Lemma~\ref{lem:unstable-cone} computes it when \(\Delta(E)>0\). Combining with \eqref{eq:total-cone-ruled}, we prove the assertion (ii).
\end{proof}

\section{Proof of Theorem \ref{thm:main-birational-obstruction}}
\label{sec:birational-propagation}

We first show the following statement.

\begin{proposition}
\label{prop:birational-propagation}
Let \(Y\) be a smooth projective surface. Assume that there exist a \(\mathbb Q\)-ample class \(A\) with \(c_1(Y)\cdot A>0\) and a normal ample test configuration \((\mathcal Y,\mathcal A)\) for \((Y,A)\) whose total space is smooth, with reduced central fibre, such that
\[
        \Fcal^{\NA}(\mathcal Y,\mathcal A)<0.
\]
Then for every birational morphism \(q:X\to Y\) from a smooth projective surface,
\(\Kah_X^{\mathrm{psc}}\subsetneq\mathcal T_X^+\).
\end{proposition}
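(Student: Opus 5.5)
The plan is to produce, on \(X\), a rational K\"ahler class close to \(q^*A\) that still has positive total scalar curvature, together with a test configuration on which \(\Fcal^{\NA}<0\). Lemma~\ref{lem:PSC-NA-slope} then shows that this class is not PSC. Since \(q\) is a composition of point blowups, write \(q^*A-\epsilon\sum_i E_i\) with suitable rational weights and small \(\epsilon>0\). Concretely, take \(A_\epsilon:=q^*A-\epsilon D\), where \(D\) is an effective \(q\)-exceptional combination with \(-D\) \(q\)-ample. Then \(A_\epsilon\) is \(\mathbb Q\)-ample for small rational \(\epsilon>0\). We also have \(c_1(X)\cdot A_\epsilon=c_1(Y)\cdot A+O(\epsilon)>0\), because \(c_1(X)=q^*c_1(Y)-\sum E_i\) and \(q^*c_1(Y)\cdot q^*A=c_1(Y)\cdot A\). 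Hence \(2\pi A_\epsilon\in\mathcal T_X^+\).

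Next we build a test configuration for \((X,A_\epsilon)\). Pull back \(\overline{\mathcal Y}\) by \(q\times\mathrm{id}\), that is, take \(\overline{\mathcal X}\) to be the blowup of \(\overline{\mathcal Y}\) along the strict transforms of the curves \(\{y_i\}\times\Pj^1\) corresponding to the blown-up points. Equivalently, take a resolution dominating both \(\overline{\mathcal Y}\) and \(X\times\Pj^1\) that is an isomorphism over \(\Pj^1\setminus\{0\}\); normality suffices for the non-Archimedean formulas. Let \(Q:\overline{\mathcal X}\to\overline{\mathcal Y}\), and consider the class \(\overline{\mathcal A}_\epsilon:=Q^*\overline{\mathcal A}-\epsilon\mathcal D\). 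Here \(\mathcal D\) is the closure of \(D\times\mathbb C^*\), or a slight modification of it by central-fibre components. For small \(\epsilon\) this class is relatively ample, because \(Q^*\overline{\mathcal A}\) is relatively semiample and \(-\mathcal D\) is \(Q\)-relatively ample. If ampleness fails on a non-generic situation, I would instead use the non-Archimedean metric \(\phi_{\mathcal A}\circ q\) and note that \(\Fcal^{\NA}\) is defined and continuous for such metrics; the pulled-back test configuration \((Q^*\overline{\mathcal A})\) is only semiample, and \cite{BHJ2017} allows semiample test configurations.

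The key computation is that \(\Fcal^{\NA}\) depends continuously on \(\epsilon\). At \(\epsilon=0\) it equals \(\Fcal^{\NA}(\mathcal Y,\mathcal A)\), which is negative. By \eqref{eq:JNA-FNA}, after normalizing, \(\Fcal^{\NA}\) is \(K^{\log}_{\overline{\mathcal X}/\Pj^1}\cdot\overline{\mathcal A}_\epsilon^{\,2}/A_\epsilon^2\), a polynomial in \(\epsilon\). At \(\epsilon=0\) we get \(K^{\log}_{\overline{\mathcal X}/\Pj^1}\cdot Q^*\overline{\mathcal A}^{\,2}\). Write \(K^{\log}_{\overline{\mathcal X}}=Q^*K^{\log}_{\overline{\mathcal Y}}+(\text{$Q$-exceptional})\). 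The exceptional divisors of \(Q\) map to curves \(\Gamma\), so the projection formula gives the contribution \(\mathrm{Exc}\cdot Q^*\overline{\mathcal A}^{\,2}=0\). Reducedness of the central fibre ensures that \(K^{\log}\) behaves well, with no multiplicity correction, and smoothness of \(\overline{\mathcal Y}\) lets the blowups stay smooth. So \(\Fcal^{\NA}(\epsilon)\to\Fcal^{\NA}(\mathcal Y,\mathcal A)<0\). Since \(J^{\NA}\ge0\), the inequality \eqref{eq:uniform-NA} fails for small \(\epsilon\). By Lemma~\ref{lem:PSC-NA-slope}, the class \(2\pi A_\epsilon\) is not in \(\cpsc\).

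I expect the main obstacle to be the construction of \(\overline{\mathcal X}\). Blowing up the curves \(\{y_i\}\times\Pj^1\) may hit the central fibre badly, for example at singular points of \(\mathcal Y_0\) or where \(y_i\) lies in the center of the test configuration. The result must stay normal with a reduced central fibre, and it must carry a relatively ample \(\overline{\mathcal A}_\epsilon\). To handle this I would first treat a single point blowup and induct on the number of blowups. I would choose \(\mathcal D\) to be the exceptional divisor of the blowup of the section curve, which is smooth since \(\overline{\mathcal Y}\) is smooth and the curve is a section. After that blowup the central fibre remains reduced: the section meets \(\mathcal Y_0\) transversally at a reduced point, and the new central fibre is the strict transform of \(\mathcal Y_0\) together with part of the exceptional divisor, each with multiplicity one.
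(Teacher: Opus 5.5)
Your final version of the argument (blowing up, one point at a time, the section given by the closure of the $\mathbb C^*$-orbit of the point; polarizing by $\rho^*\overline{\mathcal A}-\varepsilon_i\mathcal D$ with the $\varepsilon_i$ chosen successively small; using that the intersection numbers are polynomial in $\varepsilon_i$ with the right value at $\varepsilon_i=0$, since $\mathcal D\cdot\rho^*\overline{\mathcal A}^{\,2}=0$; keeping $c_1(X)\cdot A_\varepsilon>0$; and invoking Lemma~\ref{lem:PSC-NA-slope}) is correct and is essentially the paper's proof. You should drop the simultaneous blowup of several curves and the semiample fallback, which cannot work since $q^*A$ is not K\"ahler on $X$, and note that $\mathcal D$ is not a component of the new central fibre (the section is not contained in $\mathcal Y_0$), so that fibre is just the strict transform of $\mathcal Y_0$.
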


\begin{proof}
By \cite[Theorem~II.11]{Beauville1996}, every birational morphism between smooth projective surfaces factors as a finite sequence of blowups at closed points. Thus write
\[
X=X_N\xrightarrow{\pi_N}X_{N-1}\longrightarrow\cdots
\longrightarrow X_1\xrightarrow{\pi_1}X_0=Y,
\]
where \(X_i=\operatorname{Bl}_{p_i}X_{i-1}\). The points may be infinitely near.

We explain how to lift one blowup. Over \(\mathbb C^*\), the test configuration is equivariantly trivial. The orbit of any point \(p\in Y\) therefore defines a \(\mathbb C^*\)-equivariant section of \(\mathcal Y|_{\mathbb C^*}\to\mathbb C^*\). Properness extends this section uniquely across the origin. Denote its image by \(\mathcal Z_p\). Since the family map composed with the section is the identity, the family is a submersion along \(\mathcal Z_p\). Hence \(\mathcal Z_p\) is a smooth codimension-two center contained in the smooth locus of every fibre.

Blow up this section: \(\rho:\widetilde{\mathcal Y}=\operatorname{Bl}_{\mathcal Z_p}\mathcal Y\rightarrow\mathcal Y\), and let \(\mathcal D\) be the exceptional divisor. For every sufficiently small rational \(\varepsilon>0\), \(\widetilde{\mathcal A}_\varepsilon=\rho^*\mathcal A-\varepsilon\mathcal D\) is relatively ample and defines a test configuration for \(\pi^*A-\varepsilon D\) on \(\operatorname{Bl}_pY\), where \(\pi:\widetilde Y=\operatorname{Bl}_pY\to Y\) is the blowup and \(D\subset\widetilde Y\) is its exceptional curve. The total space remains smooth and the central fibre remains reduced.

The non-Archimedean functional is given by intersection numbers on the compactified test configuration as in \eqref{eq:JNA-FNA}. For the above fixed blowup, these intersection numbers are polynomial in \(\varepsilon\), while the general-fibre volume tends to \(A^2\). Consequently,
\begin{equation}
        \lim_{\varepsilon\searrow0}
        \Fcal^{\NA}
        (\widetilde{\mathcal Y},
        \widetilde{\mathcal A}_\varepsilon)
        =
        \Fcal^{\NA}(\mathcal Y,\mathcal A).
\label{eq:blowup-slope-continuity}
\end{equation}
Since the right-hand side is strictly negative, the lifted slope is negative for every sufficiently small \(\varepsilon>0\).

We now iterate this procedure. At the \(i\)-th stage, after the previous parameters have been fixed, the point
\(p_i\in X_{i-1}\) again defines an orbit section of the lifted test configuration. This remains true when \(p_i\) lies on a previous exceptional divisor. Choose successively \(0<\varepsilon_i\ll1\) so that relative ampleness and strict negativity of the slope are preserved at each stage. We obtain a rational K\"ahler class \(A_N\) on \(X\), defined recursively by
\[
        A_i=\pi_i^*A_{i-1}-\varepsilon_iD_i,
        \qquad A_0=A,
\]
and a normal ample test configuration \((\mathcal X_N,\mathcal A_N)\) for \(X_N,A_N\) satisfying
\begin{equation}\label{eq:F-contradiction}
            \Fcal^{\NA}(\mathcal X_N,\mathcal A_N)<0.
\end{equation}
Moreover,
\[
\begin{aligned}
        c_1(X_i)\cdot A_i
        &=
        \bigl(\pi_i^*c_1(X_{i-1})-D_i\bigr)
        \cdot
        \bigl(\pi_i^*A_{i-1}-\varepsilon_iD_i\bigr)\\
        &=c_1(X_{i-1})\cdot A_{i-1}-\varepsilon_i.
\end{aligned}
\]
The parameters can therefore be chosen so that \(c_1(X)\cdot A_N>0\). Thus \(2\pi A_N\in\mathcal T_X^+\).

If \(2\pi A_N\) contained a PSC K\"ahler metric, Lemma \ref{lem:PSC-NA-slope} would force \(\Fcal^{\NA}\ge \delta J^{\NA}\ge 0\) for every normal test configuration. This contradicts \eqref{eq:F-contradiction}. Hence
\[
        2\pi A_N
        \in
        \mathcal T_X^+\setminus\Kah_X^{\mathrm{psc}},
\]
which proves the proposition.
\end{proof}

We can now give the proof.

\begin{proof}[Proof of Theorem~\ref{thm:main-birational-obstruction}]
Since \(X\) is K\"ahler and bimeromorphic to the projective surface \(Y\), it is projective. Let \(C\) be the Harder--Narasimhan section of \(Y=\mathbb P(E)\). Choose positive rational numbers \(a,b\) satisfying
\begin{equation}
        \max\left\{g-1,\frac{\Delta(E)}{2}\right\}a
        <b<
        \left(g-1+\frac{\Delta(E)}{2}\right)a.
\label{eq:negative-class-choice}
\end{equation}
Then \(A=a\zeta+bF\) is ample and satisfies \(c_1(Y)\cdot A>0\). Moreover,
\[
        A\cdot C=b-\frac{\Delta(E)}{2}a<(g-1)a.
\]
Choose a rational number \(c\) such that
\[
        \frac{A\cdot C}{g-1}<c<a.
\]
The deformation to the normal cone of \(C\), with polarization \(\overline{\mathcal A}_c\), is smooth, relatively ample, and has reduced central fibre. Formula \eqref{eq:HN-JNA-FNA} gives
\[
\begin{aligned}
        \Fcal^{\NA}(c)
        &=
        \frac{2c}{A^2}
        \bigl(A\cdot C-(g-1)c\bigr)<0.
\end{aligned}
\]
Proposition~\ref{prop:birational-propagation} now applies to the given morphism \(q:X\to Y\), and proves the theorem.
\end{proof}
\newpage
\appendix 
\section{The momentum construction on decomposable ruled surfaces}
\label{app:decomposable-model}

In the appendix, we give a specific construction of the PSC K\"ahler cone on decomposable minimal ruled surfaces by a method different from that used in the main text.

\subsection{The \texorpdfstring{\(S^1\)}{S1}-invariant PSC cone}
\label{subsec:S1-invariant-PSC-cone}

Let \(X=\mathbb P(\mathcal O_{\Sigma_g}\oplus L)\xrightarrow{\ \pi\ } \Sigma_g\) with \(g\ge1\) and \(\deg L=-d<0\). The splitting \(\mathcal O_{\Sigma_g}\oplus L\) induces a natural fibrewise \(S^1\)-action on \(X\), obtained by scalar multiplication on the \(L\)-factor.  We define the \(S^1\)-invariant positive scalar curvature cone by
\[
\mathcal K_{X,S^1}^{\mathrm{psc}}:=\left\{\alpha\in\mathcal K_X:\,\exists\, \omega \in \alpha,~S^1\text{-invariant, s.t. }S(\omega)>0\right\}.
\]
The purpose of this subsection is to compute this cone. In particular, any K\"ahler class on \(X\) can be written as \(\alpha_{\lambda,m}:=2\pi\bigl(mC_0+d(\lambda+m)F\bigr)\). 

Choose a constant-scalar-curvature K\"ahler metric \(\omega_\Sigma\) on \(\Sigma_g\) satisfying \([\omega_\Sigma]=-2\pi c_1(L)\). Then
\[
        \int_{\Sigma_g}\omega_\Sigma=2\pi d, \qquad \text{and} \qquad S(\omega_\Sigma)
        =
        \frac{2-2g}{d}:=s\le0,
\]
thanks to the Gauss--Bonnet formula. We use the admissible momentum construction on decomposable ruled surfaces, in the spirit of Calabi ansatz \cite{Calabi1982}, following the Hwang--Singer momentum construction for circle-invariant K\"ahler metrics \cite{HwangSinger2002}.

Let \(L^\times\) denote the complement of the zero section in the total space of \(L\).  Choose a Hermitian metric \(h\) on \(L\) such that, on \(L^\times\),
\[
        \ddbar\log |v|_h^2
        =
        \pi^*\omega_\Sigma.
\]
Set \(\tau:=\log |v|_h^2\). Define the real one-form
\[
        \eta
        :=
        \frac{\sqrt{-1}}{2}
        (\bar\partial-\partial)\tau .
\]
Then
\[
        d\eta=\ddbar\tau=\pi^*\omega_\Sigma .
\]
On each fibre, if \(v=\rho e^{\sqrt{-1}\theta}\) in a local trivialization, then \(\eta=d\theta\).  Thus \(\eta\) is the connection one-form induced by the Hermitian metric \(h\) on the principal \(S^1\)-bundle of unit vectors in \(L\), extended radially to \(L^\times\).

We now choose an admissible radial potential.  Let \(f\) be a smooth strictly convex function such that \(f'(\mathbb R)=(0,m)\). Define the momentum coordinate \(r=f'(\tau)\). Since \(f'\) is a diffeomorphism from \(\mathbb R\) onto \((0,m)\), there is a smooth function \(\phi\) on \((0,m)\) determined by
\[
        \phi\left(f'(\tau)\right)=f''(\tau).
\]
We require that \(\phi\) extends smoothly to \([0,m]\), such that \(\phi(r)>0\) for \(0<r <m\), and satisfies the standard endpoint conditions
\begin{equation}\label{eq:phi-endpoint}
        \phi(0)=\phi(m)=0,
        \qquad
        \phi'(0)=1,
        \qquad
        \phi'(m)=-1.        
\end{equation}

Conversely, every smooth function \(\phi\) satisfying these positivity and endpoint conditions arises from such a function \(f\): indeed, one solves
\[
        \frac{dt}{dr}=\frac{1}{\phi(r)}
\]
on \((0,m)\).  The endpoint conditions \eqref{eq:phi-endpoint} imply
\[
        t\to-\infty\quad\text{as }r\to0,
        \qquad
        t\to+\infty\quad\text{as }r\to m,
\]
so \(t:(0,m)\to\mathbb R\) is a diffeomorphism.  Its inverse \(r=r(t)\) then defines \(f\), up to an additive constant, by \(f'(t)=r(t)\). Then
\[
        f''(t)=\phi(r(t))>0.
\]

On \(L^\times\), define the admissible metric
\[
        \omega
        :=
        \lambda\,\pi^*\omega_\Sigma+\ddbar f(\tau).
\]
Since \(dr=f''(\tau)d\tau=\phi(r)d\tau\), we have
\[
\begin{aligned}
        \ddbar f(\tau)
        &=
        f'(\tau)\ddbar\tau
        +
        f''(\tau)\sqrt{-1}\,\partial\tau\wedge\bar\partial\tau       \\
        &=
        r\,\pi^*\omega_\Sigma
        +
        dr\wedge\eta .
\end{aligned}
\]
Therefore
\(\omega=(\lambda+r)\pi^*\omega_\Sigma+dr\wedge\eta\). The form is K\"ahler on \(L^\times\) precisely when 
\(\lambda+r>0\) and \(\phi(r)>0\). Since \(0<r<m\), the first condition follows from \(\lambda>0\). The endpoint conditions \eqref{eq:phi-endpoint} guarantee that \(\omega\) extends smoothly to \(X\), and its K\"ahler class is
\[
        [\omega]
        =
        2\pi\bigl(mC_0+d(\lambda+m)F\bigr)
        =
        \alpha_{\lambda,m}.
\]
Set \(G(r):=(\lambda+r)\phi(r)\). Then the endpoint conditions \eqref{eq:phi-endpoint} become
\begin{equation}\label{eq:G-endpoint}
        G(0)=G(m)=0,
        \qquad
        G'(0)=\lambda,
        \qquad
        G'(m)=-(\lambda+m).
\end{equation}
We next compute the scalar curvature.  Since
\begin{equation}\label{eq:app-vol}
        \omega^2
        =
        2(\lambda+r)\,
        \pi^*\omega_\Sigma\wedge dr\wedge\eta,
\end{equation}
the Ricci form is
\begin{equation}
\label{eq:ricci-admissible}
        \Ric(\omega)
        =
        \pi^*\Ric(\omega_\Sigma)-\ddbar\log G .
\end{equation}
For every smooth function \(u=u(r)\), one has
\begin{equation}
\label{eq:ddbar-u}
        \ddbar u
        =
        (\phi u')'\,dr\wedge\eta
        +
        \phi u'\,\pi^*\omega_\Sigma,
\end{equation}
where primes denote derivatives with respect to \(r\).  Applying this to \(u=\log G\), we get
\[
        \phi(\log G)'
        =
        \frac{G'}{\lambda+r}.
\]
Hence
\[
        (\lambda+r)
        \bigl(\phi(\log G)'\bigr)'
        +
        \phi(\log G)'
        =
        G''.
\]
Since \(\Ric(\omega_\Sigma)=s\,\omega_\Sigma\), equations \eqref{eq:ricci-admissible} and \eqref{eq:ddbar-u} give
\[
        2\Ric(\omega)\wedge\omega
        =
        2\bigl(s-G''(r)\bigr)
        \pi^*\omega_\Sigma\wedge dr\wedge\eta.
\]
Therefore, we obtain
\begin{equation}
\label{eq:admissible-scalar-curvature}
        S(\omega)
        = \frac{2\Ric(\omega)\wedge\omega}{\omega^2}=
        \frac{s-G''(r)}{\lambda+r}.
\end{equation}

\begin{lemma}
\label{lem:admissible-positive-scalar-curvature}
The class \(\alpha_{\lambda,m}:=2\pi\bigl(mC_0+d(\lambda+m)F\bigr)\) contains an admissible K\"ahler metric with positive scalar curvature if and only if \(2\lambda+sm>0\).
\end{lemma}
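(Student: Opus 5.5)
The plan is to reduce everything to the one-variable problem for \(G(r)=(\lambda+r)\phi(r)\) on \([0,m]\). By \eqref{eq:admissible-scalar-curvature}, an admissible metric in \(\alpha_{\lambda,m}\) has \(S(\omega)>0\) exactly when \(s-G''(r)>0\) on \([0,m]\), since \(\lambda+r>0\). Admissible metrics correspond to smooth \(G\) with \(G>0\) on \((0,m)\) and the endpoint conditions \eqref{eq:G-endpoint}; this follows from the correspondence with \(\phi=G/(\lambda+r)\) and \eqref{eq:phi-endpoint}. So the lemma becomes a statement about smooth functions on an interval.

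\emph{Necessity.} I integrate \(s-G''\) against the weight \((m-r)\), which is positive on \((0,m)\). Integrating by parts and using \(G(0)=G(m)=0\) and \(G'(0)=\lambda\),
\[
\int_0^m (m-r)\bigl(s-G''(r)\bigr)\,dr=\frac{sm^2}{2}+\Bigl[-(m-r)G'\Bigr]_0^m-\int_0^m G'\,dr=\frac{sm^2}{2}+m\lambda=\frac m2\,(2\lambda+sm).
\]
If \(S(\omega)>0\), the integrand is positive on \((0,m)\), so the left side is positive and hence \(2\lambda+sm>0\). The weight \(m-r\) is the right one: the unweighted integral only yields the weaker inequality \(2\lambda+m+sm>0\).

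\emph{Sufficiency.} Assume \(2\lambda+sm>0\). I prescribe \(G''=s-h\) with \(h\) smooth and strictly positive on \([0,m]\), and set \(G(r)=\lambda r+\int_0^r (r-t)\bigl(s-h(t)\bigr)\,dt\), so that \(G(0)=0\) and \(G'(0)=\lambda\). The two remaining conditions \(G(m)=0\) and \(G'(m)=-(\lambda+m)\) are equivalent to the moment conditions
\[
\int_0^m h=sm+2\lambda+m=:I_1,\qquad \int_0^m (m-r)h=\frac{sm^2}{2}+m\lambda=:I_2 .
\]
A positive smooth \(h\) with these moments exists if and only if \(0<I_2<mI_1\). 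Here \(I_2>0\) by hypothesis, and
\[
mI_1-I_2=m\Bigl(\frac{sm}2+\lambda+m\Bigr)>0,
\]
again because \(2\lambda+sm>0\). For a concrete choice, take \(h\) affine and positive, or a convex combination of positive bumps concentrated near \(r=0\) and near \(r=m\).

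\emph{Positivity and smoothness.} Since \(s\le 0\) and \(h>0\), we have \(G''<0\), so \(G\) is strictly concave. Together with \(G(0)=G(m)=0\), this gives \(G>0\) on \((0,m)\). Then \(\phi=G/(\lambda+r)\) is smooth on \([0,m]\), positive inside, and satisfies \eqref{eq:phi-endpoint}. It therefore defines an admissible Kähler metric in \(\alpha_{\lambda,m}\) with \(S(\omega)=h/(\lambda+r)>0\).

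The main obstacle is necessity: the natural unweighted integral gives the wrong threshold, and one has to find the weight \(m-r\) that kills the \(G'(m)\) boundary term. Once that weight is found, sufficiency is a routine moment problem.
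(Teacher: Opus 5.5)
Your proof is correct and is essentially the paper's: the weighted integral against $m-r$ is the paper's Taylor formula with integral remainder, your moments $I_1,I_2$ are exactly the paper's $A,B$, and the concavity argument for $G>0$ is the same. One small slip is in the concrete choice of $h$: a positive affine $h$ only realizes ratios $I_2/(mI_1)\in(1/3,2/3)$, whereas the required ratio $\frac{2\lambda+sm}{2(2\lambda+sm+m)}$ is at most $1/3$ whenever $2\lambda+sm\le 2m$, so in general only your second option works (positive bumps near the endpoints, or any positive density with the right barycenter).
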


\begin{proof}
Suppose that an admissible positive scalar-curvature metric exists. Then  \(G\) satisfies the endpoint condition \eqref{eq:G-endpoint} and, by the scalar-curvature formula,
\[
        G''(r)<s
        \qquad
        \text{for }0<r<m.
\]
Equivalently, we may write \(G''=s-\mu\) for a smooth positive function \(\mu\) on \((0,m)\). Indeed, Taylor's expansion with integral remainder gives
\[
        G(m)
        =
        G(0)+mG'(0)+\int_0^m(m-r)G''(r)\,dr.
\]
Since \(G(0)=G(m)=0\) and \(G'(0)=\lambda\), this becomes
\[
        \int_0^m(m-r)G''(r)\,dr=-m\lambda.
\]
Substituting \(G''=s-\mu\), we obtain
\[
        \int_0^m(m-r)\mu(r)\,dr
        =
        \lambda m+\frac{s}{2}m^2.
\]
The left-hand side is strictly positive, and hence \(2\lambda+sm>0\).

Conversely, assume \(2\lambda+sm>0\).  Define
\[
        A:=2\lambda+(s+1)m,
        \qquad
        B:=m\left(\lambda+\frac{s}{2}m\right).
\]
Then \(B>0\).  Moreover,
\[
        A=(2\lambda+sm)+m>0,
\]
and
\[
\begin{aligned}
        mA-B
        =
        m\left(2\lambda+(s+1)m\right)
        -
        m\left(\lambda+\frac{s}{2}m\right)          
        =B+m^2>0.
\end{aligned}
\]
Thus \(B<mA\).

Choose a smooth positive function \(\mu\) on \([0,m]\) such that
\begin{equation}
\label{eq:mu-moment-conditions}
        \int_0^m \mu(r)\,dr=A,
        \qquad
        \int_0^m (m-r)\mu(r)\,dr=B.
\end{equation}
This is possible since \(B/A\in(0,m)\). In particular, one can choose a smooth positive probability density on \([0,m]\) whose barycenter with respect to the function \(m-r\) is \(B/A\), and multiply it by \(A\).

Define \(G\) by
\[
        G''=s-\mu,
        \qquad
        G(0)=0,
        \qquad
        G'(0)=\lambda.
\]
The first condition in \eqref{eq:mu-moment-conditions} gives
\[
        G'(m)
        =
        G'(0)+\int_0^m G''(r)\,dr
        =
        \lambda+sm-A
        =
        -(\lambda+m).
\]
The second condition in \eqref{eq:mu-moment-conditions} gives
\[
\begin{aligned}
        G(m)
        &=
        G(0)+mG'(0)+\int_0^m (m-r)G''(r)\,dr          \\
        &=
        \lambda m+\frac{s}{2}m^2-B                    \\
        &=
        0.
\end{aligned}
\]
Therefore \(G\) satisfies the endpoint condition \eqref{eq:G-endpoint}.

Since \(G''=s-\mu<0\), the function \(G\) is strictly concave.  Together with \(G(0)=G(m)=0\), this implies \(G(r)>0\) for \(0<r<m\). Define
\[
        \phi(r):=\frac{G(r)}{\lambda+r}.
\]
Then \(\phi>0\) on \((0,m)\), \(\phi\in C^\infty([0,m])\), and satisfies the endpoint condition \eqref{eq:phi-endpoint}.

Thus \(\phi\) defines an admissible K\"ahler metric in \(\alpha_{\lambda,m}\) with positive scalar curvature by \eqref{eq:admissible-scalar-curvature}, because \(G^{\prime\prime}<s\).
\end{proof}

\begin{proposition}
\label{prop:S1-invariant-psc-cone}
The \(S^1\)-invariant positive scalar curvature cone is
\[
        \mathcal K_{X,S^1}^{\mathrm{psc}}
        =
        \left\{
        \alpha_{\lambda,m}\in\mathcal{K}_X:
        ~2\lambda+sm>0
        \right\}.
\]
\end{proposition}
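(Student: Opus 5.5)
The plan is to prove the two inclusions separately. The inclusion \(\supseteq\) comes from the explicit construction in Lemma~\ref{lem:admissible-positive-scalar-curvature}. The inclusion \(\subseteq\) comes from the non-Archimedean obstruction of Proposition~\ref{prop:HN-obstruction}. That obstruction applies to every PSC K\"ahler metric, \(S^1\)-invariant or not. This lets us avoid the main difficulty one would expect: a general \(S^1\)-invariant metric need not be of the admissible Calabi form, so the ODE argument in the lemma does not apply to it directly.

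First I translate between the two parametrizations of \(H^{1,1}\). Since \(C_0=\zeta-\frac d2F\), we have
\[
\alpha_{\lambda,m}=2\pi\Bigl(m\zeta+d\bigl(\lambda+\tfrac m2\bigr)F\Bigr),
\]
so in the notation \(2\pi(a\zeta+bF)\) one has \(a=m\) and \(b=d(\lambda+\frac m2)\). Here \(E=\mathcal O_{\Sigma_g}\oplus L\) with \(\deg L=-d<0\) is unstable, and \(\Delta(E)=d\). By \eqref{eq:kahler-cone}, \(\alpha_{\lambda,m}\in\Kah_X\) if and only if \(m>0\) and \(b>\frac d2a\). The second condition is equivalent to \(\lambda>0\). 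So the Kähler classes are exactly the \(\alpha_{\lambda,m}\) with \(\lambda,m>0\), which are precisely the parameters for which the momentum construction is defined.

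For \(\supseteq\), take \(\alpha_{\lambda,m}\in\Kah_X\) with \(2\lambda+sm>0\). Lemma~\ref{lem:admissible-positive-scalar-curvature} produces an admissible metric \(\omega=(\lambda+r)\pi^*\omega_\Sigma+dr\wedge\eta\) in this class with \(S(\omega)>0\). Both \(r=f'(\tau)\) and \(\eta\) are invariant under the fibrewise \(S^1\)-action, since \(\tau=\log|v|_h^2\) is. Hence \(\omega\) is \(S^1\)-invariant, and \(\alpha_{\lambda,m}\in\mathcal K^{\mathrm{psc}}_{X,S^1}\).

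For \(\subseteq\), suppose \(\alpha_{\lambda,m}\) contains an \(S^1\)-invariant PSC metric. Then it lies in \(\Kah_X^{\mathrm{psc}}\), and Proposition~\ref{prop:HN-obstruction} gives \(2b>(d+2g-2)a\). Substituting \(a=m\) and \(b=d(\lambda+\frac m2)\) gives
\[
2d\lambda+dm>dm+(2g-2)m,
\]
that is, \(2\lambda-\frac{2g-2}{d}m>0\). Since \(s=\frac{2-2g}{d}\), this is exactly \(2\lambda+sm>0\).

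The only step needing care is the bookkeeping in the change of variables: the normalization of \(\zeta\), the identity \(\Delta(E)=d\), and the sign of \(s\). No genuine analytic obstacle remains. As a consistency check, the same conclusion shows that \(\mathcal K^{\mathrm{psc}}_{X,S^1}=\Kah_X^{\mathrm{psc}}\) on decomposable ruled surfaces, matching Lemma~\ref{lem:unstable-cone}.
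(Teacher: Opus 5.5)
Your proof is correct, but your argument for the inclusion \(\subseteq\) differs from the paper's. Your bookkeeping is right: \(\Delta(E)=d\), \(a=m\), \(b=d(\lambda+\frac m2)\), and Proposition~\ref{prop:HN-obstruction} becomes exactly \(2\lambda+sm>0\). There is also no circularity, since that proposition is proved in Section~\ref{sec:HN-obstruction} without using the appendix.

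The paper instead gives a self-contained equivariant argument, and it is precisely how the paper handles the difficulty you flag (an invariant metric need not be admissible):
\begin{enumerate}[label=(\arabic*)]
\item By Frankel's theorem, an invariant \(\omega\) has a moment map \(\mu_\omega\) with image \([0,m]\).
\item The quantity \(I(\omega)=\int_X(m-\mu_\omega)S(\omega)\,\omega^2\) is positive when \(S(\omega)>0\).
\item \(I\) depends only on the class. It splits as a Futaki invariant plus \(\overline S\int_X(m-\mu_\omega)\,\omega^2\), and the latter is shown to be invariant along a linear path of invariant forms.
\item Evaluated on an admissible metric, \(I\) equals \(\frac{Cm}{2}(2\lambda+sm)\).
\end{enumerate}

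Your route is shorter and proves more, because it never uses \(S^1\)-invariance. It gives \(\mathcal K^{\mathrm{psc}}_{X,S^1}=\Kah_X^{\mathrm{psc}}\) directly, so Lemma~\ref{lem:invariant-representative-from-prescribed-measures} is no longer needed for Proposition~\ref{prop:explicit-split-full-cone}. The cost is that it rests on the whole machinery behind Lemma~\ref{lem:PSC-NA-slope}: coercivity from the variational characterization, the \(d_1\)--\(J\) comparison, and the slope formulas. It also makes the appendix depend on the main text. The appendix is meant as an independent, elementary model of the obstruction, using only Frankel's theorem, the Futaki invariant and an ODE computation.

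The two obstructions in fact compute the same number. With \(a=m\) and \(A\cdot C_0=d\lambda\), the limiting numerator \(2aA\cdot C_0+(2-2g)a^2\) in \eqref{eq:HN-JNA-FNA} equals \(dm(2\lambda+sm)\), which is a positive multiple of \(I(\alpha_{\lambda,m})\). This is to be expected. At \(c=m\), the polarization \(A-mC_0=(b+\frac{dm}{2})F\) collapses the component \(X\) of the central fibre onto \(\Sigma_g\). What remains is a copy of \((X,\alpha_{\lambda,m})\) with its fibrewise \(\mathbb C^*\)-action.
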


\begin{proof}
The inclusion
\(\left\{
        \alpha_{\lambda,m}\in \mathcal K_X;~ 2\lambda+sm>0
        \right\}
        \subseteq       \mathcal K_{X,S^1}^{\mathrm{psc}}\)
follows directly from Lemma~\ref{lem:admissible-positive-scalar-curvature}, since admissible metrics are \(S^1\)-invariant.

It remains to prove the reverse inclusion.  Let \(\omega\in\alpha_{\lambda,m}\) be an \(S^1\)-invariant K\"ahler metric with \(S(\omega)>0\). Let \(\xi\) be the real holomorphic vector field generating the fibrewise \(S^1\)-action. Since the holomorphic \(S^1\)-action preserves \(\omega\) and has a nonempty fixed locus, Frankel’s theorem \cite{Frankel1959} implies that the action is Hamiltonian. Thus there exists a moment map \(\mu_\omega\) with \(\iota_\xi\omega=-d\mu_\omega\). The fixed locus of the \(S^1\)-action on \(X\) has two connected components, namely the zero section \(D_0\) and the infinity section \(D_\infty\). Normalize the moment map by \(\mu_\omega|_{D_0}=0\). Since the generator has period \(2\pi\) and
\[
        \int_F\omega=2\pi m,
\]
the difference of the moment map between the two fixed sections is \(m\). Hence \(\mu_\omega|_{D_\infty}=m\). With this normalization, the image of \(\mu_\omega\) is the interval
\([0,m]\).  Define
\[
        h_\omega:=m-\mu_\omega.
\]
Then \(h_\omega\ge0\) and \(h_\omega\not\equiv0\). Since \(S(\omega)>0\), it follows that
\[
        I(\omega)
        :=
        \int_X h_\omega\,S(\omega)\,\omega^2>0.
\]
We claim that \(I(\omega)\) depends only on the K\"ahler class \(\alpha_{\lambda,m}\), with the above normalization of the moment map. Indeed,
\[
\begin{aligned}
I(\omega)
=
\int_X
h_\omega
\bigl(S(\omega)-\overline S_{\alpha_{\lambda,m}}\bigr)
\,\omega^2                                      
+
\overline S_{\alpha_{\lambda,m}}
\int_X h_\omega\,\omega^2.
\end{aligned}
\]
Since \(dh_\omega=-d\mu_\omega=\iota_\xi\omega\), the function \(h_\omega\) is a Hamiltonian function for \(-\xi\), under our sign convention. Up to the normalization, the first term is the Futaki invariant of \(-\xi\), which depends only on the K\"ahler class and the holomorphic vector field \cite{FutakiMabuchi1995}. We next show that
\[
I_0=\int_X h_\omega\,\omega^2
\]
also depends only on the K\"ahler class.  Let \(\omega_0,\omega_1\in\alpha_{\lambda,m}\) be two \(S^1\)-invariant K\"ahler forms and set
\[
\omega_t:=(1-t)\,\omega_0+t\,\omega_1,
\qquad
0\le t\le1.
\]
Then \(\omega_t\) is an \(S^1\)-invariant K\"ahler form in the same class.  Let \(\mu_t\) be its moment map. Because \(\omega_1-\omega_0\) is exact and \(S^1\)-invariant, there is an \(S^1\)-invariant real one-form  \(\gamma\) such that
\[
\dot\omega_t
=
\omega_1-\omega_0
=
d\gamma.
\]
Differentiating \(\iota_\xi\omega_t=-d\mu_t\) with respect to \(t\), we obtain
\[
\iota_\xi d\gamma=-d\dot\mu_t.
\]
Since \(\gamma\) is \(S^1\)-invariant, Cartan's formula gives
\[
\iota_\xi d\gamma
=
\mathcal L_\xi\gamma-d(\iota_\xi\gamma)
=
-d(\iota_\xi\gamma).
\]
It follows that
\[
d\bigl(\dot\mu_t-\iota_\xi\gamma\bigr)=0.
\]
Hence \(\dot\mu_t-\iota_\xi\gamma\) is constant. On \(D_0\), the normalization gives \(\dot\mu_t=0\), while \(\xi=0\) because \(D_0\) is fixed. Therefore, the constant vanishes and \(\dot\mu_t=\iota_\xi\gamma\). We now differentiate the first moment of the moment map,
\[
\begin{aligned}
\frac{d}{dt}
\int_X\mu_t\omega_t^2
&=
\int_X\dot\mu_t\omega_t^2
+
2\int_X\mu_t\,d\gamma\wedge\omega_t            \\
&=
\int_X(\iota_\xi\gamma)\omega_t^2
-
2\int_Xd\mu_t\wedge\gamma\wedge\omega_t         \\
&=
\int_X(\iota_\xi\gamma)\omega_t^2
+
2\int_X\iota_\xi\omega_t
\wedge\gamma\wedge\omega_t\\
&=0
\end{aligned}
\]
Thus, \(I_0\) is independent of the chosen \(S^1\)-invariant representative \(\omega\in\alpha_{\lambda,m}\).  Since
\[
I_0=\int_Xh_\omega\,\omega^2
=
m\int_X\omega^2
-
\int_X\mu_\omega\,\omega^2,
\]
and \(\int_X\omega^2\) depends only on the K\"ahler class, the second term is also class-invariant.  Consequently \(I(\omega)\) depends only on \(\alpha_{\lambda,m}\), and we may denote it by \(I(\alpha_{\lambda,m})\). We can compute \(I(\alpha_{\lambda,m})\) by using any admissible metric in the same class.  For an admissible metric \(\omega\), the normalized moment map \(\mu_\omega\) is precisely the momentum coordinate \(r\). Hence \(h_\omega=m-r\). Using \eqref{eq:admissible-scalar-curvature} and \eqref{eq:app-vol}, we obtain, for some positive constant \(C\),
\[
        I(\alpha_{\lambda,m})
        =
        C\int_0^m
        (m-r)
        \bigl(s-G''(r)\bigr)
        \,dr.
\]
Using the endpoint conditions \eqref{eq:G-endpoint}, we have
\[
\begin{aligned}
        \int_0^m (m-r)G''(r)\,dr
        &=
        \bigl[(m-r)G'(r)\bigr]_0^m
        +
        \int_0^m G'(r)\,dr                         \\
        &=
        -mG'(0)+G(m)-G(0)                           \\
        &=
        -m\lambda.
\end{aligned}
\]
Therefore
\[
\begin{aligned}
        I(\alpha_{\lambda,m})
        &=
        C\left(
        \frac{s}{2}m^2+\lambda m
        \right)                                      
        =
        \frac{Cm}{2}(2\lambda+sm)>0.
\end{aligned}
\]
Since \(C>0\) and \(m>0\), we obtain \(2\lambda+sm>0\). This proves the reverse inclusion.
\end{proof}

\subsection{Symmetrization and the full PSC cone}
\label{subsec:symmetrization-full-cone}

We first record the symmetrization statement needed below.

\begin{lemma}
\label{lem:invariant-representative-from-prescribed-measures}
Let a compact Lie group \(G\) act holomorphically on a compact K\"ahler manifold \(Y\), and let \(\beta\in\mathcal K_Y\) be a \(G\)-invariant K\"ahler class.  Assume that \(\beta\in\mathcal K_Y^{\mathrm{psc}}\). Then \(\beta\) contains a \(G\)-invariant K\"ahler metric of positive scalar curvature.
\end{lemma}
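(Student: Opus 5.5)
The plan is to go through the variational characterization in \cite[Theorem~A]{Sha2026Variational}, not to average a PSC metric directly. Averaging a PSC metric over \(G\) does not preserve positivity of scalar curvature, since scalar curvature is nonlinear in the metric. We fix a \(G\)-invariant reference form \(\omega\in\beta\), obtained by averaging any Kähler form in \(\beta\) over \(G\) with Haar measure; this uses that \(\beta\) is \(G\)-invariant and that the Kähler cone is convex. We then take \(\Omega=\overline S_\beta\,\omega^n\), which is positive because \(\beta\in\mathcal K_Y^{\mathrm{psc}}\subseteq\mathcal T_Y^+\). This \(\Omega\) is \(G\)-invariant. Since \(\beta\) contains a PSC metric, \cite[Theorem~A]{Sha2026Variational} gives that \(\mathcal F_{\omega,\Omega}\) is \(d_1\)-coercive on \(\mathcal E^1_0(Y,\omega)\).

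Next we restrict to the \(G\)-invariant subspace \(\mathcal E^1_0(Y,\omega)^G\). Coercivity restricts to this subspace trivially. The subspace is \(d_1\)-closed, and \(\mathcal F_{\omega,\Omega}\) is lower semicontinuous and convex along finite-energy geodesics. The Mabuchi part is convex by Berman--Berndtsson, and the \(J\)-type part \(\int\phi\,\Omega-\overline S_\beta E_\omega(\phi)\) is convex along geodesics since \(E_\omega\) is affine there. By the Darvas--Rubinstein existence principle, applied within the invariant space, or more directly by \cite[Theorem~A]{Sha2026Variational}, which presumably characterizes PSC classes as those for which one can solve the prescribed scalar curvature measure equation \(S(\omega_\phi)\omega_\phi^n=\) a positive smooth measure, a minimizer exists. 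The cleaner route is to use the forward direction of that theorem: coercivity yields a smooth Kähler metric \(\omega_\phi\) whose scalar curvature measure equals a prescribed positive smooth measure \(\Omega'\) (for instance \(\Omega'=\Omega\) suitably twisted), hence \(S(\omega_\phi)>0\).

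The key point is to make the solution \(G\)-invariant. If \(\phi\) is a minimizer and \(g\in G\), then \(g^*\phi\) is also a minimizer, because \(\omega\) and \(\Omega\) are \(G\)-invariant and so \(\mathcal F_{\omega,\Omega}(g^*\phi)=\mathcal F_{\omega,\Omega}(\phi)\). There are two ways to conclude. The first is uniqueness of minimizers: the \(J\)-part is strictly convex along nonconstant geodesics in \(\mathcal E^1_0\), since \(\int\phi\,\Omega\) is convex and \(E\) is affine, while strictness comes from the \(J\)-functional. Uniqueness then forces \(g^*\phi=\phi\). The second way avoids uniqueness: take the \(G\)-barycenter of the minimizer set and argue by convexity. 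Either way we obtain a \(G\)-invariant solution \(\omega_\phi\) with \(S(\omega_\phi)\omega_\phi^n=\Omega'\). Its regularity and positivity of \(S\) follow from the regularity part of \cite[Theorem~A]{Sha2026Variational}.

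The main obstacle is uniqueness, or equivalently strict convexity, of the minimizer modulo constants. Translation invariance is handled by the normalization in \(\mathcal E^1_0\). If strict convexity is unavailable because the Mabuchi part is only convex, I would instead minimize directly over \(\mathcal E^1_0(Y,\omega)^G\). Coercivity there gives a \(G\)-invariant minimizer. By the principle of symmetric criticality (Palais) for the compact group \(G\), this minimizer is a critical point of \(\mathcal F_{\omega,\Omega}\) on the whole space, so the regularity theory applies and yields the desired \(G\)-invariant PSC metric.
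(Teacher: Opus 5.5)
Your proposal is correct and follows the paper's argument. You average to get a $G$-invariant reference form $\omega\in\beta$, take the $G$-invariant measure $\Omega=\overline S_\beta\,\omega^n$, solve $S(\omega_\varphi)\omega_\varphi^n=\Omega$ via \cite[Theorem~A]{Sha2026Variational}, and use uniqueness modulo constants to get $g^*\omega_\varphi=\omega_\varphi$, so that $S(\omega_\varphi)=\Omega/\omega_\varphi^n>0$. The paper simply quotes uniqueness from that theorem, so your strict-convexity justification and the Palais fallback are unnecessary, and no twist of $\Omega$ is needed, since critical points of $\mathcal F_{\omega,\Omega}$ solve exactly $S(\omega_\varphi)\omega_\varphi^n=\Omega$.
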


\begin{proof}
Choose a representative K\"ahler metric \(\omega_0\in\beta\). Since \(\beta\) is \(G\)-invariant, the averaged form
\[
        \omega
        :=
        \int_G g^*\omega_0\,dg
\]
is a \(G\)-invariant K\"ahler metric in the same class \(\beta\), where
\(dg\) denotes the normalized Haar measure on \(G\).

Since \(\beta\in\mathcal K_Y^{\mathrm{psc}}\), there exists a K\"ahler metric of positive scalar curvature in \(\beta\). Set \(\Omega=\overline{S}_\beta\,\omega^n\). By \cite[Theorem~A]{Sha2026Variational}, the prescribed scalar-curvature measure equation
\begin{equation}
\label{eq:psc-measure-symmetrization}
        S(\omega_\varphi)\,\omega_\varphi^n
        =
        \Omega,
        \qquad
        \omega_\varphi=\omega+\ddbar\varphi\in\beta,
\end{equation}
admits a smooth solution \(\varphi\).  Moreover, for fixed \(\Omega\), the solution is unique up to addition of constants to \(\varphi\).

For every \(g\in G\), since \(g^*\omega=\omega\) and \(g^*\Omega=\Omega\), the potential \(g^*\varphi\) gives another solution of \eqref{eq:psc-measure-symmetrization}. By uniqueness,  \(g^*\omega_\varphi=\omega_\varphi\). Thus \(\omega_\varphi\) is \(G\)-invariant. Finally,
\[
        S(\omega_\varphi)
        =
        \frac{\Omega}{\omega_\varphi^n}
        >
        0.
\]
Hence \(\omega_\varphi\) is a \(G\)-invariant K\"ahler metric of positive scalar curvature in \(\beta\).
\end{proof}

We now give the concrete formula for the full PSC K\"ahler cone.

\begin{proposition}
\label{prop:explicit-split-full-cone}
Let \(X=\mathbb P(\mathcal O_{\Sigma_g}\oplus L)\rightarrow\Sigma_g\) with \(\deg L=-d<0\), and let  \(\alpha_{\lambda,m}:=2\pi\bigl(mC_0+d(\lambda+m)F\bigr)\) be a K\"ahler class.
Then
\[
        \mathcal K_X^{\mathrm{psc}}
        =
        \left\{
        \alpha_{\lambda,m}\in\mathcal{K}_X:~
        2\lambda-\frac{2m(g-1)}{d}>0
        \right\}.
\]
In particular, if \(g\ge2\), then
\(\mathcal K_X^{\mathrm{psc}}
        \subsetneq
        \mathcal T_X^+\).
\end{proposition}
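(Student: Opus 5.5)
The plan is to combine the symmetrization Lemma~\ref{lem:invariant-representative-from-prescribed-measures} with the computation of the invariant cone in Proposition~\ref{prop:S1-invariant-psc-cone}. The condition \(2\lambda+sm>0\) with \(s=(2-2g)/d\) is exactly \(2\lambda-\frac{2m(g-1)}{d}>0\), so the proposition amounts to the equality \(\mathcal K_X^{\mathrm{psc}}=\mathcal K_{X,S^1}^{\mathrm{psc}}\).

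\textbf{The two inclusions.} The inclusion \(\mathcal K_{X,S^1}^{\mathrm{psc}}\subseteq\mathcal K_X^{\mathrm{psc}}\) is trivial. For the reverse inclusion, let \(\alpha_{\lambda,m}\in\mathcal K_X^{\mathrm{psc}}\). The fibrewise \(S^1\)-action is by holomorphic automorphisms in the connected component of the identity. Hence it acts trivially on \(H^{1,1}(X,\mathbb R)\), and every K\"ahler class is \(S^1\)-invariant. Lemma~\ref{lem:invariant-representative-from-prescribed-measures}, applied with \(G=S^1\), then yields an \(S^1\)-invariant PSC K\"ahler metric in \(\alpha_{\lambda,m}\). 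Therefore \(\alpha_{\lambda,m}\in\mathcal K_{X,S^1}^{\mathrm{psc}}\), and Proposition~\ref{prop:S1-invariant-psc-cone} gives \(2\lambda+sm>0\).

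\textbf{The strict inclusion for \(g\ge2\).} We translate into the \((a,b)\) coordinates. Since \(C_0=\zeta-\frac d2F\),
\[
\alpha_{\lambda,m}=2\pi\Bigl(m\zeta+\bigl(d\lambda+\tfrac{dm}{2}\bigr)F\Bigr),
\]
so \(a=m\) and \(b=d\lambda+\frac{dm}{2}\). The PSC condition \(2\lambda>2m(g-1)/d\) becomes \(b>(g-1+\frac d2)a\), which agrees with Lemma~\ref{lem:unstable-cone} since \(\Delta=d\). By \eqref{eq:total-cone-ruled}, \(\mathcal T_X^+\) is given by \(b>\max\{g-1,\frac d2\}a\). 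When \(g\ge2\) and \(d>0\), we have \(\max\{g-1,\frac d2\}<g-1+\frac d2\). Any class with \(b\) strictly between these two bounds lies in \(\mathcal T_X^+\setminus\mathcal K_X^{\mathrm{psc}}\).

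\textbf{Main obstacle.} The delicate step is justifying the use of Lemma~\ref{lem:invariant-representative-from-prescribed-measures}. This requires the uniqueness and existence statement from \cite[Theorem~A]{Sha2026Variational} for the prescribed measure \(\Omega=\overline S_\beta\omega^n\), which in turn requires \(\overline S_\beta>0\). That positivity holds because \(\beta\) is PSC. One must also check that Frankel's theorem applies to the invariant metric so that the moment-map argument of Proposition~\ref{prop:S1-invariant-psc-cone} goes through. Both points are already addressed in the earlier results, so the proof reduces to assembling them.
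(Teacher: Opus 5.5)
Your proposal is correct and follows the paper's proof. You get \(\mathcal K_X^{\mathrm{psc}}=\mathcal K_{X,S^1}^{\mathrm{psc}}\) from the connectedness of the \(S^1\)-action and Lemma~\ref{lem:invariant-representative-from-prescribed-measures}, then conclude with Proposition~\ref{prop:S1-invariant-psc-cone}; the only cosmetic difference is in the strict-inclusion step, where you pass to \((a,b)\)-coordinates and invoke \eqref{eq:total-cone-ruled}, while the paper computes \(c_1(X)\cdot\alpha_{\lambda,m}=2\pi d\bigl(2\lambda+(1+s)m\bigr)\) directly and exhibits the boundary class \(\lambda=-sm/2\).
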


\begin{proof}
By Proposition~\ref{prop:S1-invariant-psc-cone},
\[
        \mathcal K_{X,S^1}^{\mathrm{psc}}
        =
        \left\{
        \alpha_{\lambda,m} \in \mathcal K_X:
        ~ 2\lambda+sm>0
        \right\} \subseteq \mathcal K_X^{\mathrm{psc}}.
\]

Conversely, suppose that \(\alpha_{\lambda,m}\in\mathcal K_X^{\mathrm{psc}}\). The natural fibrewise \(S^1\)-action on \(X\) is connected, hence acts trivially on cohomology.  Therefore the class \(\alpha_{\lambda,m}\) is \(S^1\)-invariant.  Applying Lemma~\ref{lem:invariant-representative-from-prescribed-measures} to this \(S^1\)-action, we obtain an \(S^1\)-invariant K\"ahler metric of positive scalar curvature in the class \(\alpha_{\lambda,m}\).  Hence \(\alpha_{\lambda,m}\in \mathcal K_{X,S^1}^{\mathrm{psc}}\).
Thus
\[
        \mathcal K_X^{\mathrm{psc}}
        =
        \mathcal K_{X,S^1}^{\mathrm{psc}}=\left\{
        \alpha_{\lambda,m}\in\mathcal{K}_X:~
        \lambda>0,\ m>0,\ 2\lambda+sm>0
        \right\}.
\]

It remains to compute \(\mathcal T_X^+\). Since \(c_1(X) = 2C_0+(d+2-2g)F\)
and
\[
        C_0^2=-d,
        \qquad
        C_0\cdot F=1,
        \qquad
        F^2=0,
\]
we have
\[
\begin{aligned}
        c_1(X)\cdot\alpha_{\lambda,m}
        &=
        2\pi
        \bigl(2C_0+(d+2-2g)F\bigr)
        \cdot
        \bigl(mC_0+d(\lambda+m)F\bigr)                 \\
        &=
        2\pi
        \bigl(
        -2dm+2d(\lambda+m)+m(d+2-2g)
        \bigr)                                         \\
        &=
        2\pi d
        \left(
        2\lambda+
        \left(1+\frac{2-2g}{d}\right)m
        \right)                                        \\
        &=
        2\pi d\bigl(2\lambda+(1+s)m\bigr).
\end{aligned}
\]
Since \(d>0\), this gives
\[
        \mathcal T_X^+
        =
        \left\{
        \alpha_{\lambda,m}\in\mathcal{K}_X:
        ~ 2\lambda+(1+s)m>0
        \right\}.
\]

Therefore \(\mathcal K_X^{\mathrm{psc}}\subseteq\mathcal T_X^+\). In particular, the inclusion is strict when \(g\ge 2\). Assume now that \(g\ge2\), so that \(s<0\).  Fix \(m>0\) and set \(\lambda=-sm/2>0\). Then \(2\lambda+sm=0\), whereas \(2\lambda+(1+s)m=m>0\). Hence \(\alpha_{\lambda,m}\in\mathcal T_X^+\setminus\mathcal K_X^{\mathrm{psc}}\). Consequently, \( \mathcal K_X^{\mathrm{psc}} \subsetneq \mathcal T_X^+\).
\end{proof}

\bibliographystyle{alpha}
\bibliography{wpref}

\bigskip
\footnotesize

Zehao Sha, \textsc{Institute for Mathematics and Fundamental Physics,
Hefei, China}\par\nopagebreak
Email address: \texttt{zhsha@imfp.org.cn}\par\nopagebreak
Homepage: \url{https://ricciflow19.github.io/}

\end{document}